\newtheorem{theorem}{Theorem}
\newtheorem{lemma}[theorem]{Lemma}
\newtheorem{proposition}[theorem]{Proposition}
\newtheorem{lettertheorem}{Theorem}
\newtheorem{letterlemma}[lettertheorem]{Lemma}
\theoremstyle{definition}
\theoremstyle{remark}
\newtheorem{remark}[theorem]{Remark}
\numberwithin{equation}{section}
\newcommand{\D}{\mathbb{D}}
\newcommand{\DD}{\widehat{\mathcal{D}}}
\newcommand{\Dd}{\widecheck{\mathcal{D}}}
\newcommand{\DDD}{\mathcal{D}}
\newcommand{\N}{\mathbb{N}}
\newcommand{\RR}{\mathbb{R}}
\newcommand{\C}{\mathbb{C}}
\renewcommand{\phi}{\varphi}
\newcommand{\whw}{\widehat{\omega}}
\newcommand{\veps}{\varepsilon}
\def\BMO{\mathord{\rm BMO}}
\def\d{\delta}           
     \def\om{\omega}      
                  \def\z{\zeta}
\def\LL{\mathcal{L}}
\renewcommand{\H}{\mathcal{H}}
\begin{document}
\title{Carleson measures for weighted Bergman--Zygmund spaces}

\today

\keywords{Bergman-Zygmund space; Carleson measure; doubling weight; Integration operator; Lebesgue-Zygmund space; Littlewood-Paley formula.}

\thanks{The first author is upported by NRF of Korea (NRF-2020R1F1A1A01048601),
the second by NRF of Korea (NRF-2022R1F1A1063305) and
the third by NRF of Korea(NRF-2019R1I1A3A01041943). The fourth author is partially supported by Wihuri Foundation. The fourth, fifth and sixth author is supported in part by Academy of Finland 356029.}

\author[H. R. Cho]{Hong Rae Cho}
\address{Department of Mathematics, Pusan National University, Busan 46241, Korea}
\email{chohr@pusan.ac.kr}

\author[H. Koo]{Hyungwoon Koo}
\address{Department of Mathematics, Korea University, Seoul 02841, Korea}
\email{koohw@korea.ac.kr}

\author[Y. J. Lee]{Young Joo Lee}
\address{Department of Mathematics, Chonnam National University, Gwangju 61186, Korea}
\email {leeyj@chonnam.ac.kr}

\author[A. Pennanen]{Atte Pennanen}
\address{University of Eastern Finland, P.O.Box 111, 80101 Joensuu, Finland}
\email{atte.pennanen@uef.fi}

\author[J. R\"atty\"a]{Jouni R\"atty\"a}
\address{University of Eastern Finland, P.O.Box 111, 80101 Joensuu, Finland}
\email{jouni.rattya@uef.fi}

\author[F. Wu]{Fanglei Wu}
\address{University of Eastern Finland, P.O.Box 111, 80101 Joensuu, Finland}
\email{fanglei.wu@uef.fi}

\begin{abstract}
For $0<p<\infty$, $\Psi:[0,\infty)\to(0,\infty)$ and a finite positive Borel measure $\mu$ on the unit disc $\D$, the Lebesgue--Zygmund space $L^p_{\mu,\Psi}$ consists of all measurable functions $f$ such that $\lVert f \rVert_{L_{\mu, \Psi}^{p}}^p =\int_{\D}|f|^p\Psi(|f|)\,d\mu< \infty$. For an integrable radial function $\om$ on $\D$, the corresponding weighted Bergman-Zygmund space $A_{\om, \Psi}^{p}$ is the set of all analytic functions in $L_{\mu, \Psi}^{p}$ with $d\mu=\om\,dA$.

The purpose of the paper is to characterize bounded (and compact) embeddings $A_{\omega,\Psi}^{p}\subset L_{\mu, \Phi}^{q}$, when $0<p\le q<\infty$, the functions $\Psi$ and $\Phi$ are essential monotonic, and $\Psi,\Phi,\om$ satisfy certain doubling properties. The tools developed on the way to the main results are applied to characterize bounded and compact integral operators acting from $A^p_{\om,\Psi}$ to $A^q_{\nu,\Phi}$, provided $\nu$ admits the same doubling property as $\omega$.
\end{abstract}

\maketitle

\section{Introduction and main results}

For $0<p<\infty$, $\Psi: [0, \infty) \to (0,\infty)$ and a finite positive Borel measure $\mu$, the Lebesgue-Zygmund space $L_{\mu, \Psi}^{p}$ consists of all measurable functions $f$ in the unit disc $\D=\{z\in\C:|z|<1\}$ such that
	$$
	\lVert f \rVert_{L_{\mu, \Psi}^{p}} = \left(\int_{\D}|f|^p\Psi(|f|)\,d\mu\right)^{\frac1p} < \infty.
	$$
In this study we assume that the inducing function $\Psi$ satisfies certain hypotheses. We say that it belongs to the class $\mathcal{L}$ if it is essentially monotonic and satisfies the property
	\begin{equation}\label{doubling}
	\Psi(x) \asymp \Psi(x^2),\quad 0\leq x<\infty.
	\end{equation}
Here, $\Psi$ is essentially increasing if there exists an absolute constant $C=C(\Psi)>0$ such that $\Psi(x)\le C\Psi(y)$ for all $x\le y$. The essentially decreasing functions are defined in an analogous manner. Besides constants, for each $\alpha\in\RR$, the logarithmic function $x\mapsto (\log(e+x))^{\alpha}$ is a typical member of $\LL$. In fact, we will show in Section~\ref{Sec2} that each essentially increasing function in $\LL$ cannot grow faster than a certain logarithmic function while the decay of each essentially decreasing function in $\LL$ is restricted in a similar manner. It is worth mentioning that either requirement for a function to belong to $\mathcal{L}$ does not imply the other, see Section~\ref{Sec2} for details.
	
If $d\mu=\om\,dA$, where $dA(z)=\frac{dx\,dy}{\pi}$ is the element of the normalized Lebesgue area measure on $\D$, then we write $L_{\mu, \Psi}^{p}=L_{\om, \Psi}^{p}$. For $\Psi(x)=\left(\log(e+x)\right)^{\beta}$ with $0<\beta<\infty$, the corresponding Lebesgue-Zygmund space becomes the classical Zygmund space, see~\cite{BR, BR2, BergZyg} and references therein for further information. The Lebesgue-Zygmund space $L_{\mu, \Psi}^{p}$ is not necessarily a normed space even in the case $p\geq1$, because $\|\cdot\|_{L^p_{\mu,\Psi}}$ does not necessarily satisfy the absolute homogeneity. However, it can be proved that it is a quasinorm by standard methods.

Let $\H(\D)$ denote the space of analytic functions in $\D$. An integrable function $\om:\D\to [0,\infty)$ is called a weight. The weighted Bergman-Zygmund space $A_{\om, \Psi}^{p}$ consists of all analytic functions in $L_{\om, \Psi}^{p}$, that is, $A_{\om, \Psi}^{p}=L_{\om, \Psi}^{p}\cap\H(\D)$. The definition of the weighted Bergman-Zygmund spaces seems similar to that of certain weighted Bergman-Orlicz spaces. More precisely, if the function $t\mapsto t^p\Psi(t)$ happens to be non-decreasing and convex, then the corresponding Bergman-Orlicz space has the same elements as the Bergman-Zygmund space induced by $\Psi$ and $\om$. However, it is easy to see that their quasi-norms are not comparable. For more information on the weighted Bergman-Orlicz spaces, see \cite{Orlicz1} and \cite{Orlicz2}.

In this paper, we are interested in the Bergman-Zygmund spaces induced by radial weights satisfying certain doubling properties. To be precise, some more notation is needed. We say that $\om$ is radial if $\om(z)=\om(|z|)$ for all $z\in\D$, and we write $\whw(z)=\int_{|z|}^1\om(s)\,ds$ for short. We assume $\widehat{\om}>0$ on $\D$ for otherwise $A_{\om, \Psi}^{p}=\H(\D)$.
A radial weight $\om$ belongs to the class~$\DD$ if there exists a constant $C=C(\om)\ge1$ such that the tail integral $\widehat{\om}$ satisfies the doubling condition $\widehat{\om}(r)\le C\widehat{\om}(\frac{1+r}{2})$ for all $0\le r<1$. Moreover, if there exist constants $K=K(\om)>1$ and $C=C(\om)>1$ such that $\widehat{\om}(r)\ge C\widehat{\om}\left(1-\frac{1-r}{K}\right)$ for all $0\le r<1$, then we write $\om\in\Dd$. Finally, the intersection $\DD\cap\Dd$ is denoted by $\DDD$. The definitions of
these classes of weights are of geometric in nature, and the classes themselves arise
naturally in the operator theory related to Bergman spaces, see for example \cite{PelRat2020, PRV}.

For $\Phi,\Psi\in\LL$, we say that $\mu$ is a $(q,\Phi)$-Carleson measure for $A_{\omega,\Psi}^{p}$ if the identity mapping $I: A_{\omega,\Psi}^{p} \to L_{\mu, \Phi}^{q}$ is bounded. Here an operator is called bounded if it maps bounded sets into bounded sets, and further, it is compact if it sends a neighborhood of the origin onto a relatively compact set in the target space.

In the paper, we will characterize the $(q,\Phi)$-Carleson measures for $A_{\omega,\Psi}^{p}$, provided $0<p\le q<\infty$, $\om\in\DDD$ and $\Phi,\Psi\in\LL$. To give the precise statement some basic definitions are in order. The Carleson square associated with $a\in\D\setminus\{0\}$ is the polar rectangle
	$$
	S(a)=\left\{z\in\D:\,|z|\ge|a|,\,|\arg a-\arg z|<\frac12(1-|a|)\right\}.
	$$
Further, we set $S(0)=\D$ for convenience, and let $\omega(S)=\int_S \omega dA$ for measurable $S\subset \D$.
Our main result reads as follows.

\begin{theorem}\label{mainresult}
Let $0<p\le q<\infty$, $\omega \in \mathcal{D}$ and $\Phi,\Psi\in\LL$. Then the following statements hold:
    \begin{itemize}
        \item [(i)]$\mu$ is a $(q,\Phi)$-Carleson measure for $A_{\omega,\Psi}^{p}$ if and only if
    \begin{equation}\label{bdd}
    \sup_{a\in \D}\frac{\mu(S(a))\Phi\left(\frac{1}{\omega(S(a)))}\right)}{\left(\omega(S(a))\Psi\left(\frac{1}{\omega(S(a))}\right)\right)^{\frac{q}{p}}} < \infty;
   \end{equation}
     \item [(ii)]$I:A_{\omega,\Psi}^{p}\to L^q_{\mu,\Phi}$ is compact if and only if
    \begin{equation}\label{cpt}
\lim_{|a|\to1^-}\frac{\mu(S(a))\Phi\left(\frac{1}{\omega(S(a))}\right)}{\left(\omega(S(a))\Psi\left(\frac{1}{\omega(S(a))}\right)\right)^{\frac{q}{p}}}=0.
    \end{equation}
    \end{itemize}
\end{theorem}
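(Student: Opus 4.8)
\emph{Proof strategy.}
The statement splits into the boundedness assertion~(i) and the compactness assertion~(ii), and the core is~(i), which in turn consists of the necessity and the sufficiency of~\eqref{bdd}. Throughout, the real difficulty is the lack of homogeneity of the Zygmund quasi-norms: $\|cf\|_{L^{p}_{\om,\Psi}}$ is not a constant multiple of $\|f\|_{L^{p}_{\om,\Psi}}$, so the usual normalization arguments are unavailable and one must keep track of how $\Psi$ and $\Phi$ transform under the changes of scale that occur. The hypothesis~\eqref{doubling}, or rather its consequence $\Psi(x^{c})\asymp\Psi(x)$ for each fixed $c>0$ (with constants depending on $c$), together with the logarithmic growth and decay bounds for members of $\LL$ established in Section~\ref{Sec2}, is exactly what makes this bookkeeping feasible. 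Two preliminary tools are used repeatedly. First, the pointwise estimate $\om(S(z))\,|f(z)|^{p}\Psi(|f(z)|)\lesssim\|f\|_{A^{p}_{\om,\Psi}}^{p}$ for $z\in\D$, which follows from a change-of-exponent argument and Jensen's inequality applied to a convex minorant of $t\mapsto t^{p/s}\Psi(t^{1/s})$ composed with $|f|^{s}$ (for $s$ small), using the sub-mean value property over a Bergman disc whose $\om$-mass is comparable to $\om(S(z))$ together with the fact that $\om\in\DD$. Second, for each $a\in\D$ a test function $f_{a}\in A^{p}_{\om,\Psi}$ with $\|f_{a}\|_{A^{p}_{\om,\Psi}}\asymp1$ and $|f_{a}(z)|^{p}\gtrsim\bigl(\om(S(a))\Psi(1/\om(S(a)))\bigr)^{-1}$ for all $z\in S(a)$; such $f_{a}$ can be taken of the form $\bigl(\om(S(a))\Psi(1/\om(S(a)))\bigr)^{-1/p}\bigl((1-|a|)/(1-\overline{a}z)\bigr)^{\gamma}$ with $\gamma=\gamma(\om,p)$ large, the norm being evaluated from the standard estimates for $\|(1-\overline{a}\,\cdot)^{-\gamma}\|_{A^{p}_{\om}}$ and~\eqref{doubling} (for an essentially decreasing $\Psi$ one also uses that $\om\in\Dd$ to control the region where $|f_{a}|$ is of order one).

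For the necessity of~\eqref{bdd} in~(i), suppose $I$ is bounded and test on $f_{a}$. Since $|f_{a}(z)|^{p}\gtrsim\bigl(\om(S(a))\Psi(1/\om(S(a)))\bigr)^{-1}$ on $S(a)$ and $\Phi(|f_{a}(z)|)\asymp\Phi(1/\om(S(a)))$ there by~\eqref{doubling}, integrating over $S(a)$ gives
\begin{equation*}
\frac{\mu(S(a))\,\Phi\bigl(1/\om(S(a))\bigr)}{\bigl(\om(S(a))\Psi(1/\om(S(a)))\bigr)^{q/p}}\lesssim\int_{\D}|f_{a}|^{q}\Phi(|f_{a}|)\,d\mu=\|If_{a}\|_{L^{q}_{\mu,\Phi}}^{q}\lesssim\|I\|^{q}\,\|f_{a}\|_{A^{p}_{\om,\Psi}}^{q}\lesssim\|I\|^{q}
\end{equation*}
uniformly in $a$, which is~\eqref{bdd}.

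For the sufficiency, assume~\eqref{bdd} and normalize $\|f\|_{A^{p}_{\om,\Psi}}=1$. Fix a covering of $\D$ by a lattice of Bergman discs $\{\Delta_{k}\}$ of bounded overlap such that $\{\tfrac12\Delta_{k}\}$ still covers $\D$, and set $M_{k}=\sup_{\frac12\Delta_{k}}|f|$ and $J_{k}=\int_{\Delta_{k}}|f|^{p}\Psi(|f|)\,\om\,dA$, so that $\sum_{k}J_{k}\lesssim1$. The sub-mean value property in the form appropriate for $|f|^{p}\Psi(|f|)$ gives $M_{k}^{p}\Psi(M_{k})\lesssim J_{k}/\om(\Delta_{k})$; combining this with~\eqref{bdd} on $\Delta_{k}\asymp S(a_{k})$, with $\om(S(a_{k}))\asymp\om(\Delta_{k})$, and writing $M_{k}^{q}\Phi(M_{k})=\bigl(M_{k}^{p}\Psi(M_{k})\bigr)^{q/p}\,\Phi(M_{k})\Psi(M_{k})^{-q/p}$, one obtains
\begin{equation*}
\int_{\D}|f|^{q}\Phi(|f|)\,d\mu\le\sum_{k}\mu\bigl(\tfrac12\Delta_{k}\bigr)\,M_{k}^{q}\Phi(M_{k})\lesssim\sum_{k}J_{k}^{q/p}\,\frac{\Psi\bigl(1/\om(\Delta_{k})\bigr)^{q/p}}{\Phi\bigl(1/\om(\Delta_{k})\bigr)}\cdot\frac{\Phi(M_{k})}{\Psi(M_{k})^{q/p}}.
\end{equation*}
By the pointwise estimate and the growth bounds for $\LL$ one has $M_{k}\lesssim\om(\Delta_{k})^{-1/p}$ up to a polylogarithmic factor, so that when $J_{k}\ge\om(\Delta_{k})^{\delta_{0}}$ for a small fixed $\delta_{0}<1$, \eqref{doubling} forces $\Psi(M_{k})\asymp\Psi(1/\om(\Delta_{k}))$ and $\Phi(M_{k})\asymp\Phi(1/\om(\Delta_{k}))$, and the corresponding term is $\lesssim J_{k}^{q/p}\le J_{k}$; when $J_{k}<\om(\Delta_{k})^{\delta_{0}}$, the two Zygmund quotients are controlled crudely by powers of $\log(1/\om(\Delta_{k}))$, which are absorbed using $q\ge p$ and the summability $\sum_{k}\om(\Delta_{k})^{\sigma}<\infty$ for $\sigma$ not too small (valid precisely because $\om\in\Dd$), again yielding a contribution $\lesssim\sum_{k}J_{k}$. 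Summation over $k$ gives $\|If\|_{L^{q}_{\mu,\Phi}}^{q}\lesssim\|f\|_{A^{p}_{\om,\Psi}}^{q}$. I expect this last bookkeeping to be the main obstacle: reconciling, across all discs simultaneously, the $\Psi$- and $\Phi$-factors coming from the Carleson condition at scale $\om(\Delta_{k})$ with those coming from $M_{k}$, and in particular handling the discs on which $|f|$ is much smaller than the maximal size $\om(\Delta_{k})^{-1/p}$, is where all of the hypotheses ($\om\in\DDD$, essential monotonicity and~\eqref{doubling} for $\Psi$ and $\Phi$, and $q\ge p$) are needed, the case $q=p$ requiring the most care.

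For part~(ii) one follows the usual scheme. By the pointwise estimate, bounded subsets of $A^{p}_{\om,\Psi}$ are normal families, so from any bounded sequence one extracts a subsequence converging locally uniformly to some $f\in A^{p}_{\om,\Psi}$ (Fatou). Writing the $L^{q}_{\mu,\Phi}$-quasi-norm of $f_{n}-f$ as the sum of an integral over $\{|z|\le r\}$, which tends to $0$ by local uniform convergence, and a tail over $\{|z|>r\}$, one applies the sufficiency estimate in its localized form together with~\eqref{cpt} to see that the tail is $\lesssim\veps(r)$ uniformly in $n$, whence $I$ is compact. Conversely, if~\eqref{cpt} fails there is a sequence $a_{n}$ with $|a_{n}|\to1^{-}$ along which the quotient in~\eqref{cpt} stays bounded below; the functions $f_{a_{n}}$ are bounded in $A^{p}_{\om,\Psi}$ and converge to $0$ uniformly on compacta, so compactness of $I$ would force $\|If_{a_{n}}\|_{L^{q}_{\mu,\Phi}}\to0$, contradicting the lower bound established in the necessity part of~(i).
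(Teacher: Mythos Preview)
Your necessity argument for (i) and the scheme for (ii) match the paper's. For the sufficiency of (i) you take a different route, discretizing via a Bergman lattice rather than using the paper's continuous Minkowski-inequality argument, but there is a genuine gap at the step
\[
M_k^p\Psi(M_k)\lesssim \frac{J_k}{\om(\Delta_k)},\qquad J_k=\int_{\Delta_k}|f|^p\Psi(|f|)\,\om\,dA.
\]
Even granting an area-measure sub-mean value for $|f|^p\Psi(|f|)$ via your convex-minorant device (which itself needs the nontrivial fact that $t\mapsto t^{p/s}\Psi(t^{1/s})$ is comparable to a convex function), what you obtain is $M_k^p\Psi(M_k)\lesssim(1-|a_k|)^{-2}\int_{\Delta_k}|f|^p\Psi(|f|)\,dA$. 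Converting this to the $\om$-weighted integral $J_k$ would require $\om(\zeta)\gtrsim\om(\Delta_k)/(1-|a_k|)^2$ on $\Delta_k$, which is false in general since a weight in $\DDD$ may vanish on sets of positive measure. Replacing $\om$ by the regularized weight $\widehat\om/(1-|\cdot|)$ fixes the local inequality but transfers the difficulty to showing $\int_\D|f|^p\Psi(|f|)\,\frac{\widehat\om}{1-|\cdot|}\,dA\lesssim\int_\D|f|^p\Psi(|f|)\,\om\,dA$; since $\Psi(|f|)$ is not radial, the standard integration-by-parts argument does not apply.

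The paper avoids this by never asserting a local weighted estimate. It uses only subharmonicity of $|f|^p$ with respect to area measure, applies Minkowski's inequality in integral form to swap the order of integration, and invokes the Carleson hypothesis to reach $\int_\D|f|^p\Psi\bigl(1/\om(S(\zeta))\bigr)\,\om(S(\zeta))(1-|\zeta|)^{-2}\,dA(\zeta)$, whose weight is \emph{radial}. A radial integration by parts together with Lemma~\ref{LemmaPsiOm} (this is where $\om\in\Dd$ is used) converts this to $\int_\D|f|^p\Psi\bigl(1/(1-|\zeta|)\bigr)\om(\zeta)\,dA(\zeta)$. Only then is $\Psi(1/(1-|\zeta|))$ exchanged for $\Psi(|f(\zeta)|)$, via a split on $E_\varepsilon=\{|f|<(1-|\cdot|)^{-\varepsilon}\}$, with separate treatment according to whether $\Phi,\Psi$ are essentially increasing or decreasing. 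Your dichotomy $J_k\gtrless\om(\Delta_k)^{\delta_0}$ is a discrete shadow of this last split, but it cannot be executed until the area-versus-$\om$ passage has been made.
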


In the statement of Theorem~\ref{mainresult} we may replace the Carleson square $S(a)$ by the pseudohyperbolic disc $\Delta(a,r)=\{z\in \D:|(a-z)/(1-\overline{a}z)|<r\}$ centered at $a\in\D$ and of radius $r\in(0,1)$. This is due to the hypothesis $\om\in\DDD$ and standard arguments, see Section~\ref{Sec3} for details.
This phenomenon was first noticed in \cite{Lu1985} for the Bergman space and  also appears in the case of $q$-Carleson measures for Bergman space $A^p_\om$ induced by $\om\in\DDD$ by \cite{BinRattya,LRW2021}. Theorem~\ref{mainresult} reduces to \cite[Theorem~2]{BinRattya} when $\Phi$ and $\Psi$ are constants, and the choices $\Phi(x)=\left(\log(e+x)\right)^\gamma$ and $\Psi(x)=\left(\log(e+x)\right)^\beta$ give \cite[Theorem~C]{BergZyg} when the weight involved is $\om(z)=(1-|z|)^\alpha$.

As a byproduct of the proof of Theorem \ref{mainresult} and the theorem itself we deduce that for each $\om\in\DDD$ and $\Psi\in\mathcal{L}$, the weight $W=\Psi\left(\frac{1}{1-|\cdot|}\right)\om$ belongs to $\DDD$ and $A^p_{\om,\Psi}$ equals $A^p_W$ as a set. This is the main content of Proposition~\ref{setequiv} which is proved at the end of the paper. However, it should be underlined here that even though the spaces have the same elements, the quasinorms are not in general comparable.

En route to the proof of Theorem~\ref{mainresult} we develop several tools that allow us to study integral operators on weighted Bergman--Zygmund spaces. The integral operator $T_g$, induced by a $g\in\H(\D)$, is defined by
	\[
	T_g(f)(z)=\int_{0}^{z}f(\z)g'(\z)\,d\z, \quad z\in \D.
	\]
This operator was initially considered by Pommerenke~\cite{Pomme}, and it began to be extensively studied after the appearance of the works by Aleman, Cima and Siskakis \cite{AC,AS}. In the paper, we will characterize the boundedness and compactness of $T_g: A^p_{\om,\Psi} \to A^q_{\nu,\Phi}$ in terms of neat conditions on $g$, provided $0<p\leq q<\infty$ and $\om,\nu\in\DDD$. Our results in this direction are stated as follows.

\begin{theorem}\label{Tgbounded}
    Let $0<p\leq q<\infty$, $\om,\nu \in \DDD$ and $\Psi,\Phi \in \LL$, $g\in\H(\D)$. Then the following statements hold:
\begin{itemize}
\item[(i)]$T_g: A^p_{\om,\Psi} \to A^q_{\nu,\Phi}$ is bounded if and only if
    \begin{equation}\label{Tgcondition}
        \sup_{a \in \D}\frac{|g'(a)|(1-|a|)\left(\Phi\left(\frac{1}{\om(S(a))}\right)\nu(S(a))\right)^\frac{1}{q}}
    {\left(\Psi\left(\frac{1}{\om(S(a))}\right)\om(S(a))\right)^\frac{1}{p}}<\infty;
    \end{equation}
    \item[(ii)] $T_g: A^p_{\om,\Psi} \to A^q_{\nu,\Phi}$ is compact if and only if
    \begin{equation}\label{Tgcondition2}
        \lim_{|a|\to 1^-}\frac{|g'(a)|(1-|a|)\left(\Phi\left(\frac{1}{\om(S(a))}\right)\nu(S(a))\right)^\frac{1}{q}}
    {\left(\Psi\left(\frac{1}{\om(S(a))}\right)\om(S(a))\right)^\frac{1}{p}}=0.
    \end{equation}
\end{itemize}
\end{theorem}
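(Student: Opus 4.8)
The plan is to reduce the statement about the integral operator $T_g$ to the Carleson measure characterization in Theorem~\ref{mainresult} via a Littlewood--Paley type identity, which is one of the ``tools developed on the way'' mentioned in the introduction. Concretely, I would first establish (or invoke from the earlier part of the paper) an equivalence of the form
\[
\nm{T_g(f)}_{A^q_{\nu,\Phi}}^q \asymp \abs{(T_g f)(0)}^q + \int_{\D}\abs{f(z)}^q\Phi\bigl(\abs{f(z)}\bigr)\abs{g'(z)}^q(1-\abs{z})^q\nu(z)\,dA(z),
\]
valid for $\nu\in\DDD$ and $\Phi\in\LL$; since $(T_gf)'=f g'$, this says $\nm{T_g f}_{A^q_{\nu,\Phi}}^q$ is comparable to $\nm{f}^q_{L^q_{\mu_g,\Phi}}$, where $d\mu_g(z)=\abs{g'(z)}^q(1-\abs{z})^q\nu(z)\,dA(z)$ is the natural measure attached to $g$. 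Once this is in hand, $T_g\colon A^p_{\om,\Psi}\to A^q_{\nu,\Phi}$ is bounded (resp.\ compact) if and only if $\mu_g$ is a $(q,\Phi)$-Carleson measure for $A^p_{\om,\Psi}$ (resp.\ the corresponding vanishing version), and Theorem~\ref{mainresult} applies verbatim.

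Next I would translate the Carleson condition \eqref{bdd} for $\mu_g$ into the pointwise condition \eqref{Tgcondition} on $g'$. The quantity to control is
\[
\sup_{a\in\D}\frac{\mu_g(S(a))\,\Phi\!\left(\frac1{\om(S(a))}\right)}{\bigl(\om(S(a))\Psi(\frac1{\om(S(a))})\bigr)^{q/p}},
\qquad \mu_g(S(a))=\int_{S(a)}\abs{g'(z)}^q(1-\abs{z})^q\nu(z)\,dA(z).
\]
The upper bound $\mu_g(S(a))\lesssim \abs{g'(a)}^q(1-\abs{a})^q\nu(S(a))$ follows because $\abs{g'}^q$ is subharmonic, so its average over the Carleson square $S(a)$ (or the comparable pseudohyperbolic disc) is dominated by a value near $a$, together with $(1-\abs{z})\asymp(1-\abs{a})$ on $S(a)$ and the fact that $\nu\in\DD$ makes $\nu(S(a))\asymp\nu(\Delta(a,r))$. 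For the reverse inequality one restricts the integral to a fixed pseudohyperbolic disc $\Delta(a,r)$, uses the subharmonicity of $\abs{g'}^q$ to bound $\abs{g'(a)}^q$ from above by its average over $\Delta(a,r)$, and uses $\nu\in\Dd$ to get $\nu(\Delta(a,r))\gtrsim\nu(S(a))$. Raising to the power $1/q$ and incorporating the $\Phi,\Psi,\om$ factors (which depend only on $\abs{a}$ up to the $\DDD$/$\LL$ doubling and are therefore essentially constant on $\Delta(a,r)$) gives the equivalence of \eqref{bdd} for $\mu_g$ with \eqref{Tgcondition}. The compactness part \eqref{Tgcondition2} versus \eqref{cpt} is handled by the same estimates applied on tail regions $\abs{a}\to1^-$.

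I expect the main obstacle to be the Littlewood--Paley equivalence in the Zygmund setting: with the extra factor $\Phi(\abs{f})$ inside the integral, the usual Hardy--Stein / area-function arguments do not transfer mechanically, and one must exploit that $\Phi\in\LL$ satisfies $\Phi(x)\asymp\Phi(x^2)$ together with the slow (at most logarithmic) growth/decay of members of $\LL$ proved in Section~\ref{Sec2}. The likely route is to combine the classical Littlewood--Paley estimate for $A^q_{\nu}$ (valid for $\nu\in\DDD$) with the observation, encapsulated in Proposition~\ref{setequiv}, that $A^q_{\nu,\Phi}=A^q_V$ as sets with $V=\Phi(\tfrac1{1-\abs{\cdot}})\nu\in\DDD$, and then to track how the Zygmund weight behaves under the identity $(T_gf)'=fg'$; because $\abs{f(z)}$ and $\abs{f}$ evaluated near $z$ are comparable on Bergman-type discs only up to polynomial factors, the stability of $\Phi$ under $x\mapsto x^2$ (hence under bounded powers) is exactly what rescues the argument. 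A secondary technical point, already flagged in the paper, is that the quasinorms are not comparable even though the sets coincide, so all estimates must be carried out at the level of the integrals $\int\abs{f}^q\Phi(\abs{f})\,d\mu$ rather than by a naive norm identification; this is bookkeeping but needs care. Once these are settled, the deduction of both \eqref{Tgcondition} and \eqref{Tgcondition2} from Theorem~\ref{mainresult} is routine.
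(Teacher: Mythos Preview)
Your overall strategy---reduce $T_g$ to a Carleson-measure problem via a Littlewood--Paley identity and then invoke Theorem~\ref{mainresult}---is the right instinct and is close to what the paper does, but two of the steps you outline do not work as stated.

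First, the pointwise equivalence $\mu_g(S(a))\asymp |g'(a)|^q(1-|a|)^q\nu(S(a))$ is false, and your justification has the subharmonicity inequality backwards. Subharmonicity of $|g'|^q$ gives $|g'(a)|^q\lesssim\frac{1}{(1-|a|)^2}\int_{\Delta(a,r)}|g'|^q\,dA$, i.e.\ the value at the center is dominated by the average, not the reverse. In particular, if $g'(a)=0$ the right-hand side of your claimed upper bound vanishes while $\mu_g(S(a))$ need not. What is true is that the \emph{suprema} over $a$ of the two expressions are comparable, but this is not proved by a pointwise estimate; the paper establishes necessity directly by plugging in the test functions $f_a$ from Lemma~\ref{fa in Ap}, using subharmonicity to get the lower bound on $|g'(a)|^q$ in terms of an integral over $\Delta(a,r)$, and then passing through the Littlewood--Paley formula to reach $\|T_g(f_a)\|_{A^q_{\nu,\Phi}}$.

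Second, the Littlewood--Paley identity you propose, with $\Phi(|f|)$ on the right, is not what is used and would be hard to justify: the argument of $\Phi$ there is neither $|T_gf|$ nor $|(T_gf)'|$, so no standard area-function argument produces it. The paper bypasses a Zygmund-space Littlewood--Paley formula entirely. It first replaces $\Phi(|T_gf|)$ by $\Phi\bigl(\tfrac{1}{1-|z|}\bigr)$ via the growth estimate of Lemma~\ref{Growthf1} and the set-splitting argument from the proof of Theorem~\ref{mainresult}, obtaining
\[
\|T_g(f)\|^q_{A^q_{\nu,\Phi}}\lesssim 1+\int_{\D}|T_g(f)(z)|^q\Phi\!\left(\tfrac{1}{1-|z|}\right)\nu(z)\,dA(z).
\]
Since $V(z)=\Phi\bigl(\tfrac{1}{1-|z|}\bigr)\tfrac{\widehat{\nu}(z)}{1-|z|}\in\DDD$ by Proposition~\ref{setequiv}, the \emph{classical} Littlewood--Paley theorem for $A^q_V$ (\cite[Theorem~5]{PelRat2020}) applies and yields the integral of $|fg'|^q(1-|z|)^qV$. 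The pointwise condition~\eqref{Tgcondition} is then inserted directly into this integral, and a further application of Lemma~\ref{Growthf1} to $|f|^{q-p}$ reduces to the estimate already proved in Theorem~\ref{mainresult}. So the passage through Proposition~\ref{setequiv} that you anticipated is indeed the key, but it is used to avoid, not to prove, a Zygmund Littlewood--Paley formula.
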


We make couple of observations on Theorem~\ref{Tgbounded}. First, as in the case of Carleson measures and because of the same reason, pseudohyperbolic discs can be substituted for the Carleson squares. Second, the special case $p=q$ and $\om=\nu$ in Theorem~\ref{Tgbounded} shows that $T_g: A^p_\om\to A^q_\om$ is bounded (resp. compact) if and only if $g$ belongs to the (resp. little) Bloch space \cite[Theorem~6.4]{PelSum14}.

We finish the introduction with a couple of words about the notation used in this paper. The letter $C=C(\cdot)$ will denote an absolute constant whose value depends on the parameters indicated in the parenthesis and may change from one occurrence to another. If there exists a constant
$C=C(\cdot)>0$ such that $a\le Cb$, then we write either $a\lesssim b$ or $b\gtrsim a$. In particular, if $a\lesssim b$ and
$a\gtrsim b$, then we denote $a\asymp b$ and say that $a$ and $b$ are comparable.

\section{Class $\LL$}\label{Sec2}

Recall that a function $\Psi: [0, \infty) \to (0,\infty)$ belongs to the class $\mathcal{L}$ if it is essentially monotonic and satisfies the property $\Psi(x)\asymp\Psi(x^2)$ for all $0\le x<\infty$.
It is worth mentioning that either requirement for a function to belong to $\mathcal{L}$ does not imply the other. Namely, the essential monotonicity obviously does not imply the condition \eqref{doubling} while a non essentially monotonic function  satisfying \eqref{doubling} can be constructed as follows. Choose an increasing and unbounded function $f$ and a decreasing function $g$ such that $f(x) \asymp f(x^2)$ and $g(x)\asymp g(x^2)$ for all $[0,\infty)$. Fix $x_1$ and $y_1$ such that $x_1^2>y_1>x_1>1$, and set $x_{n+1}=x_{n}^2$ and $y_{n+1}=y_{n}^2$ for all $n\in\N$. Consider $\Psi$ defined by
	$$
	\Psi(x)=\begin{cases}
    g(x), \quad x \in \bigcup_{n=1}^\infty(x_n,y_n), \\
    f(x), \quad x \in [0,\infty)\setminus \bigcup_{n=1}^\infty(x_n,y_n).
	\end{cases}
	$$
Then, $\Psi(x) \asymp \Psi(x^2)$ for all $x\in[0,\infty)$, but $\Psi$ is not essentially monotonic. Natural choices of $f$ and $g$ are adequate powers of logarithmic functions.

In this section we provide some basic properties of functions in the class $\LL$ that will be later on used in the proof of the main theorem. We begin with pointwise growth/decay estimates.

\begin{lemma}\label{GrowthPsi}
Let $\Psi\in\LL$. Then there exist constants $c_1=c_1(\Psi), C_1=C_1(\Psi)>0$ and $c_2=c_2(\Psi), C_2=C_2(\Psi)\in \RR$ such that
	$$
	c_1(\log(e+x))^{c_2}\leq \Psi(x) \leq C_1(\log(e+x))^{C_2},\quad 0\le x<\infty.
	$$
\end{lemma}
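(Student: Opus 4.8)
The plan is to exploit the functional inequality $\Psi(x)\asymp\Psi(x^2)$ iteratively. Write $\Psi(x)\asymp\Psi(x^2)$ as $c\,\Psi(x^2)\le\Psi(x)\le C\,\Psi(x^2)$ for some absolute constants $0<c\le1\le C$ and all $x\ge0$. Applying this repeatedly gives $c^n\,\Psi(x^{2^n})\le\Psi(x)\le C^n\,\Psi(x^{2^n})$, or equivalently, replacing $x$ by $x^{1/2^n}$, control of $\Psi(x)$ in terms of $\Psi(x^{1/2^n})$. The idea is that taking $2^n$-th roots pushes the argument toward a fixed reference point (say $x=e$ or $x=2$), where $\Psi$ takes a fixed finite positive value, and the accumulated factors $c^n,C^n$ translate into powers of $n\asymp\log\log x$, i.e. into powers of $\log(e+x)$.

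More concretely, first I would handle large $x$, say $x\ge e$ (for $x$ in a bounded interval $[0,e]$ the bounds are trivial since $\Psi$ is positive and, being essentially monotonic, is bounded above and below by positive constants there, after absorbing the essential-monotonicity constant). Given $x\ge e$, choose $n=n(x)\in\N$ minimal so that $x^{1/2^n}\le e^2$; then $n\asymp\log\log x$, and also $x^{1/2^n}\ge e$, so $x^{1/2^n}\in[e,e^2]$, a fixed compact interval on which $\Psi$ is comparable to the constant $\Psi(e)$ up to the essential-monotonicity constant. From $c^n\Psi(x^{1/2^n}\text{'s square-iterate})\le\cdots$ — precisely, iterating $\Psi(y)\le C\Psi(y^2)$ $n$ times starting from $y=x^{1/2^n}$ yields $\Psi(x^{1/2^n})\le C^n\Psi(x)$ and similarly $c^n\Psi(x)\le\Psi(x^{1/2^n})$ in the other direction. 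Hence $c^n\,\Psi(x^{1/2^n})\le\Psi(x)$ up to reversing; more carefully, $\Psi(x)\le c^{-n}\Psi(x^{1/2^n})\lesssim c^{-n}$ and $\Psi(x)\ge C^{-n}\Psi(x^{1/2^n})\gtrsim C^{-n}$. Since $c^{-n}=e^{n\log(1/c)}$ and $n\asymp\log\log x\asymp\log\log(e+x)$, we get $c^{-n}\asymp(\log(e+x))^{C_2}$ for $C_2=\log(1/c)/\log 2$ (up to constants), and likewise $C^{-n}\asymp(\log(e+x))^{c_2}$ with $c_2=-\log C/\log2\le0$. This gives exactly the claimed two-sided bound, with the constants $c_1,C_1$ absorbing the behavior on $[0,e^2]$ and the essential-monotonicity constant.

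I would write the argument cleanly by first recording the iterated inequality $c^n\Psi(x^{2^n})\le\Psi(x)\le C^n\Psi(x^{2^n})$ as a small sub-claim (immediate induction on $n$), then doing the substitution $x\mapsto x^{1/2^n}$ and the choice of $n$ as above. One should be slightly careful that the doubling relation is stated for all $x\in[0,\infty)$ including $x<1$, but for the growth estimate only $x$ large matters and there $x^{1/2^n}>1$, so no issues with the argument dropping below $1$. The main obstacle — really the only point requiring care rather than routine bookkeeping — is making the correspondence $n\asymp\log\log x$ precise and checking that the resulting exponent $\log(1/c)/\log 2$ etc. is genuinely an absolute constant depending only on $\Psi$ (through the doubling constant in \eqref{doubling}) and that the essential-monotonicity constant only affects the multiplicative constants $c_1,C_1$, not the exponents. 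Everything else is elementary estimation.
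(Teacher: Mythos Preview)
Your proposal is correct and follows essentially the same strategy as the paper's proof: iterate the doubling relation $\Psi(x)\asymp\Psi(x^2)$ a number $n\asymp\log\log x$ of times to reduce the argument to a fixed bounded interval (the paper uses $[0,4]$, you use $[e,e^2]$), on which essential monotonicity pins $\Psi$ between two positive constants; the accumulated factors $c^{\pm n},C^{\pm n}$ then become powers of $\log(e+x)$. The only cosmetic difference is that the paper parameterizes by picking $n$ with $2^{2^n}\le x<2^{2^{n+1}}$ and compares $\Psi(x)$ to $a=\inf_{[0,4]}\Psi$, $b=\sup_{[0,4]}\Psi$ directly, whereas you phrase it via the substitution $x\mapsto x^{1/2^n}$; the underlying computation is identical.
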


\begin{proof}
Let
	$$
	a=\inf_{0\le x\le4} \Psi(x) \quad\mbox{and}\quad
	b=\sup_{0\le x\le4} \Psi(x).
	$$
Since $\Psi\in\LL$, both $a$ and $b$ are strictly positive and finite, and thus the statement is valid for all $0\le x\le 4$.
Let  $4<x<\infty$, and choose $n\in\N$ such that $2^{2^n}\le x<2^{2^{n+1}}$. Then, by \eqref{doubling}, there exist constants
$\widetilde c_2, \widetilde C_2>0$ such that $\widetilde c_2^n a \le \Psi(x) \le \widetilde C_2^n b$. Moreover, since $n\le\log_2\log_2x<n+1$, there exist constants {$c_2, C_2 \in \RR$} such that
	$$
	\widetilde c_2^n \asymp (\log(e+x))^{c_2},
	\quad\textrm{and}\quad
	\widetilde C_2^n \asymp (\log(e+x))^{C_2}.
	$$
The assertion follows from the estimates established.
\end{proof}

The functions in the class $\LL$ admit the following pointwise estimates which are useful for our purposes.

\begin{proposition}\label{properties}
Let $0<p<\infty$ and $\Phi,\Psi\in\LL$. Then the following statements hold:
			\begin{itemize}
      \item [(i)]$\Psi(x)\asymp \Psi(y)$, when $x \asymp y$;
      \item[(ii)]$\Psi(x)\asymp\Psi(x^p)$ for all $0\le x<\infty$;
      \item[(iii)]$\Psi(x/\Phi(x))\asymp\Psi(x)$ for all $0\le x<\infty$.
			\end{itemize}
\end{proposition}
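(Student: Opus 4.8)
The plan is to reduce all three items to the iterated doubling relation $\Psi(x)\asymp\Psi(x^{2^{n}})$ (with comparison constant depending on $n$, obtained from \eqref{doubling}) together with one elementary remark: on any bounded interval $[0,N]$ a function in $\LL$ is comparable to a positive constant. Indeed, if $\Psi$ is essentially increasing and $0\le x\le N$, then essential monotonicity gives $\Psi(0)\lesssim\Psi(x)\lesssim\Psi(N)$, and $\Psi(0),\Psi(N)\in(0,\infty)$; the essentially decreasing case is symmetric. Consequently each of (i)--(iii) is trivial as long as the arguments occurring in it stay bounded, so it suffices to treat large values of the variable. Throughout I will also assume $\Psi$ (and $\Phi$) essentially increasing, since the essentially decreasing case only reverses inequalities without affecting the comparabilities.

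For (ii), take first $p\ge1$ and $x\ge1$, and pick $n\in\N$ with $p\le 2^{n}$. Then $x\le x^{p}\le x^{2^{n}}$, so essential monotonicity yields $\Psi(x)\lesssim\Psi(x^{p})\lesssim\Psi(x^{2^{n}})$, while iterating \eqref{doubling} gives $\Psi(x^{2^{n}})\asymp\Psi(x)$; hence $\Psi(x^{p})\asymp\Psi(x)$. For $0<p<1$ one applies this to the exponent $1/p\ge1$ and the variable $y=x^{p}$, i.e. $\Psi(x)=\Psi(y^{1/p})\asymp\Psi(y)=\Psi(x^{p})$. Item (i) is analogous: if $x\asymp y$ we may assume $x\le y\le Cx$ with $C\ge1$; when $x\ge C$ we have $y\le Cx\le x^{2}$, so $\Psi(x)\lesssim\Psi(y)\lesssim\Psi(x^{2})\asymp\Psi(x)$ by essential monotonicity and \eqref{doubling}, and when $x<C$ both $x$ and $y$ lie in the bounded interval $[0,C^{2})$.

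The slightly more delicate point is (iii), where Lemma~\ref{GrowthPsi} applied to $\Phi$ is the crucial input: it supplies exponents $c_{2},C_{2}\in\RR$ with $c_{1}(\log(e+x))^{c_{2}}\le\Phi(x)\le C_{1}(\log(e+x))^{C_{2}}$. Since a power of a logarithm is eventually dominated by any fixed positive power of $x$, this forces $x^{1/2}\lesssim x/\Phi(x)\lesssim x^{3/2}$ for all sufficiently large $x$ (whatever the signs of $c_{2},C_{2}$). By (i) and (ii) we have $\Psi(x^{1/2})\asymp\Psi(x)\asymp\Psi(x^{3/2})$, and since $x/\Phi(x)$ lies, up to multiplicative constants, between $x^{1/2}$ and $x^{3/2}$, essential monotonicity and (i) squeeze $\Psi(x/\Phi(x))$ between two quantities each comparable to $\Psi(x)$. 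For bounded $x$ the argument $x/\Phi(x)$ stays bounded, because $\Phi$ is bounded above and below by positive constants there, so the bounded-interval remark finishes the proof.

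The main obstacle is thus item (iii): one must quantify how much the factor $1/\Phi(x)$ can distort $x$, and the only thing that makes this harmless is the two-sided logarithmic control of Lemma~\ref{GrowthPsi}, which guarantees the distortion is sub-polynomial. Once that is available, (iii) follows from (i) and (ii) in the same way that (i) and (ii) follow from the doubling relation \eqref{doubling}.
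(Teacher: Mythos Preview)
Your argument is correct and is exactly the fleshed-out version of what the paper leaves implicit: the paper merely states that (i) and (ii) are easy and that (iii) follows from (ii) and Lemma~\ref{GrowthPsi}, and your proof supplies precisely those details (bounded-interval triviality, iterated doubling for (ii), sandwiching via essential monotonicity for (i), and the sub-polynomial bound $x^{1/2}\lesssim x/\Phi(x)\lesssim x^{3/2}$ from Lemma~\ref{GrowthPsi} feeding into (i)--(ii) for (iii)).
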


\begin{proof}
It is easy to see that (i) and (ii) hold, while (iii) follows from (ii) and Lemma~\ref{GrowthPsi}.
\end{proof}

As an immediate corollary to Lemma~\ref{GrowthPsi} we obtain another handy result. Its essential content is that the product of $\Psi\in\LL$ and any power of the identity function results an essentially increasing function. Observe that this statement can not be proved by using only the essential monotonicity of $\Psi$. Namely, for an example the function defined by $\Psi(x)=2^{-2^n}$ for $n\le x<n+1$ is decreasing but $x\mapsto x^p \Psi(x)$ is not essentially increasing.

\begin{lemma}\label{ThetaIncreasing}
Let $0<p,\beta<\infty$ and $\Psi \in \LL$. Then $\Theta:x\mapsto x^p \Psi(x)$ is essentially increasing on $[0,\infty)$, and $\theta: x\mapsto (1-x)^{\beta}\Psi\left(\frac{1}{1-x}\right)$ is essentially decreasing on $[0,1)$.
\end{lemma}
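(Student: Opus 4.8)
The plan is to reduce both assertions to the logarithmic growth bound of Lemma~\ref{GrowthPsi} together with the elementary fact that a power of the identity dominates any fixed power of a logarithm. First I would treat $\Theta(x)=x^p\Psi(x)$. By Lemma~\ref{GrowthPsi} there are constants $c_1,C_1>0$ and $c_2,C_2\in\RR$ with $c_1(\log(e+x))^{c_2}\le\Psi(x)\le C_1(\log(e+x))^{C_2}$ for all $x\ge0$. Hence, for $0\le x\le y$,
\[
\frac{\Theta(x)}{\Theta(y)}=\frac{x^p}{y^p}\cdot\frac{\Psi(x)}{\Psi(y)}\le\frac{C_1}{c_1}\,\frac{x^p(\log(e+x))^{C_2}}{y^p(\log(e+x_0))^{c_2}}\quad\text{with a suitable reference point},
\]
so the key point is to bound $\sup_{0\le x\le y}\frac{x^p(\log(e+x))^{C_2}}{y^p(\log(e+y))^{c_2}}$ by an absolute constant. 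Splitting into the cases $C_2\le0$ and $C_2>0$: if $C_2\le0$ then $x^p(\log(e+x))^{C_2}\le x^p\le y^p$ is already handled after a harmless adjustment for the denominator's logarithmic factor (which is bounded below on $[0,1]$ and increasing after), while if $C_2>0$ one uses that $t\mapsto t^p(\log(e+t))^{C_2}$ is (eventually) increasing, being a product of increasing functions, so its supremum over $[0,y]$ is comparable to its value at $y$; dividing by $y^p(\log(e+y))^{c_2}$ then leaves $(\log(e+y))^{C_2-c_2}$ bounded on $[0,4]$ and one only needs $(\log(e+y))^{C_2-c_2}\lesssim$ something absorbed — but since we compare $\Theta(x)$ with $\Theta(y)$ directly through $\Psi(x)\le C\Psi(y)$ is false in general, the cleanest route is instead: write $\Theta(x)/\Theta(y)\le (x/y)^p\cdot\bigl(\Psi(x)/\Psi(y)\bigr)$ and bound $\Psi(x)/\Psi(y)$ from above by $\frac{C_1}{c_1}\bigl(\log(e+x)\bigr)^{C_2}\bigl(\log(e+y)\bigr)^{-c_2}$; then since $x\le y$ this is $\le\frac{C_1}{c_1}\bigl(\log(e+y)\bigr)^{|C_2|+|c_2|}$, and it suffices to check that $t^p\bigl(\log(e+t)\bigr)^{M}$ is essentially increasing for every fixed $M\in\RR$, which follows because for $M\ge0$ it is a product of increasing functions and for $M<0$ one has, for $t\le s$, $\frac{t^p(\log(e+t))^M}{s^p(\log(e+s))^M}=\frac{t^p}{s^p}\cdot\bigl(\tfrac{\log(e+s)}{\log(e+t)}\bigr)^{|M|}\le\bigl(\tfrac{\log(e+s)}{\log(e+t)}\bigr)^{|M|}$, and $\frac{\log(e+s)}{\log(e+t)}\lesssim \bigl(\tfrac{s}{t}\bigr)^{p/|M|}$ by the standard estimate $\log(e+s)\lesssim_{\varepsilon}(e+s)^{\varepsilon}$, giving an absolute bound after combining with $t^p/s^p$.

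For the second statement, I would simply apply the first one after a change of variable: with $x\in[0,1)$ set $t=\frac{1}{1-x}\in[1,\infty)$, so that $\theta(x)=(1-x)^{\beta}\Psi\!\left(\frac{1}{1-x}\right)=t^{-\beta}\Psi(t)$. Since $x\mapsto t=\frac{1}{1-x}$ is increasing, $\theta$ is essentially decreasing on $[0,1)$ if and only if $t\mapsto t^{-\beta}\Psi(t)$ is essentially decreasing on $[1,\infty)$, equivalently $t\mapsto \Psi(t)/\bigl(t^{\beta}\bigr)$ is essentially decreasing, which is the reciprocal-type statement. But $t^{-\beta}\Psi(t)=\bigl(t^{\beta}/\Psi(t)\bigr)^{-1}$, and $t^{\beta}/\Psi(t)$ is essentially increasing precisely by the same argument as above with $M$ replaced by the exponent coming from $1/\Psi$, which is again controlled by Lemma~\ref{GrowthPsi} since $1/\Psi$ also has two-sided logarithmic bounds (with $c_2,C_2$ swapped in sign). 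Thus $t\mapsto t^{-\beta}\Psi(t)$ is essentially decreasing, and the claim for $\theta$ follows.

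The main obstacle — and the only place any real work is needed — is the elementary but slightly fiddly inequality that $t\mapsto t^{p}\bigl(\log(e+t)\bigr)^{M}$ is essentially increasing on $[0,\infty)$ for every fixed real $M$; once this is in hand, both halves of the lemma are immediate consequences of Lemma~\ref{GrowthPsi} and monotone changes of variable. I expect this step to be dispatched quickly using $\log(e+t)\le C_{\varepsilon}(e+t)^{\varepsilon}$ with $\varepsilon$ chosen small relative to $p/(1+|M|)$, so that the logarithmic factor can never overcome the genuine power gain in $t^{p}$.
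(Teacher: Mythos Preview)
Your reduction has a genuine gap. You correctly derive from Lemma~\ref{GrowthPsi} that
\[
\frac{\Theta(x)}{\Theta(y)}\le C\left(\frac{x}{y}\right)^{p}\bigl(\log(e+y)\bigr)^{|C_2|+|c_2|},\quad 0\le x\le y,
\]
and you also correctly verify that $t\mapsto t^{p}(\log(e+t))^{M}$ is essentially increasing for every $M\in\RR$. But these two facts do \emph{not} combine to give a uniform bound: simply take $x=y$ in the displayed inequality to get $C(\log(e+y))^{|C_2|+|c_2|}$, which is unbounded. Applying the essential monotonicity of $t^{p}(\log(e+t))^{-M}$ only yields $(x/y)^{p}(\log(e+y))^{M}\lesssim(\log(e+x))^{M}$, which is still unbounded. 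The point is that the two-sided logarithmic bounds of Lemma~\ref{GrowthPsi} are a \emph{consequence} of membership in $\LL$, but they are strictly weaker than it; in particular they discard the information that $\Psi(x)\asymp\Psi(y)$ whenever $x$ and $y$ are close on the multiplicative scale.

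The paper's proof repairs exactly this defect by splitting into the ranges $x\le y<x^{2}$ and $y\ge x^{2}$ (for $x>2$). In the first range one invokes the doubling hypothesis itself, via $\Psi(x)\asymp\Psi(x^{2})$, to get $\Psi(x)\asymp\Psi(y)$ and hence $\Theta(x)/\Theta(y)\asymp(x/y)^{p}\le1$. In the second range the crude logarithmic bound \emph{does} suffice, because now $y/x\ge x>2$ and one can show $\bigl(\log(e+x)\bigr)\bigl(\log(e+y)\bigr)\lesssim(y/x)^{\alpha}$ for every $\alpha>0$; choosing $\alpha$ small enough relative to $p$ absorbs the logarithmic loss. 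Your treatment of the second statement via $t\mapsto t^{\beta}/\Psi(t)$ and $1/\Psi\in\LL$ is fine once the first statement is secured.
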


\begin{proof}
We prove the first statement only, the latter one can be proved similarly. So we will show that there exists $C=C(\Psi,p)>0$ such that $\Theta(x)\le C \Theta(y)$ for all $0\le x\le y<\infty$. For $0\le x\le2$, this follows trivially by Lemma~\ref{GrowthPsi}. Moreover, if $2<x<\infty$ and $x\le y<x^2$, then the inequality is a direct consequence of Proposition~\ref{properties}(ii). Let now $2<x<\infty$, $x^2\le y$ and write $y=x^t$.
Then, for each $\alpha>0$ there exists $C_1=C_1(\alpha)>0$ such that
	\begin{align*}
	\left(\log(e+x)\right) \left(\log(e+y)\right)
	\le   t \left(\log(e+x)\right)^2 \le C_1 x^{\alpha (t-1)}
	= C_1 \left({y\over x}\right)^\alpha.
	\end{align*}
By Lemma \ref{GrowthPsi} there are $C_2=C_2(\Psi),\beta=\beta(\Psi)>0$ such that
	\begin{align*}
	{\Psi(y)\over \Psi(x)}\ge {C_2 \over  \left(\log(e+x)\right)^\beta \left(\log(e+y)\right)^\beta}.
	\end{align*}
Therefore, by choosing $\alpha>0$ sufficiently small such that $\alpha\beta<p$, we deduce
	\begin{align*}
	{y^p\Psi(y)\over x^p\Psi(x)}\ge   \left({y\over x}\right)^p {C_2 \over  \left(\log(e+x)\right)^\beta \left(\log(e+y)\right)^\beta}>C
	\end{align*}
for some $C=C(\Psi)>0$.
\end{proof}

\section{Proof of the main theorems}\label{Sec3}

For $f\in\H(\D)$ and $0<r<1$, set
	$$
	M_p(r,f)=\left(\frac{1}{2\pi}\int_0^{2\pi}|f(re^{i\theta})|^p\,d\theta\right)^{\frac1p},\quad 0<p<\infty,
	$$
and $M_\infty(r,f)=\sup_{|z|=r}|f(z)|$. The following lemma gives a growth estimate for functions in the weighted Bergman-Zygmund space. We will use this estimate in the proof of Theorem~\ref{mainresult}.

\begin{lemma}\label{Growthf}
Let $0<p<\infty$, $\omega \in \widehat{\mathcal{D}}$ and $\Psi\in\LL$. Then, for each fixed $0<\veps<p$,
    \begin{equation}\label{growth}
    |f(z)|^{p-\veps} \lesssim \frac{\lVert f \rVert^p_{A_{\omega,\Psi}^{p}}+1}{\om(S(z))}, \quad f\in A_{\omega,\Psi}^{p},\quad z\in\D.
    \end{equation}
\end{lemma}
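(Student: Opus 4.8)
The plan is to reduce the growth estimate in $A^p_{\om,\Psi}$ to the well-known growth estimate in the ordinary weighted Bergman space $A^p_\om$ with $\om\in\DD$, by absorbing the Zygmund factor into the norm. Recall that for $\om\in\DD$ one has the standard pointwise bound
$$
|g(z)|^p \lesssim \frac{\nm{g}^p_{A^p_\om}}{\om(S(z))},\qquad g\in A^p_\om,\ z\in\D,
$$
which follows from subharmonicity of $|g|^p$, the fact that $\om(S(z))\asymp\om(\Delta(z,r))$ for $\om\in\DD$, and integration over a pseudohyperbolic disc. So it suffices to control $\nm{f}_{A^p_\om}$ (or rather a slightly smaller $L^{p-\e}_\om$-type quantity) by $\nm{f}_{A^p_{\om,\Psi}}$.

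First I would dispose of the essentially increasing case, which is immediate: if $\Psi$ is essentially increasing then $\Psi(|f|)\gtrsim\Psi(0)>0$, hence $\nm{f}^p_{A^p_\om}\lesssim\nm{f}^p_{A^p_{\om,\Psi}}$ directly, and we are done with no loss of exponent. The substance is the essentially decreasing case. Here I would fix $0<\e<p$ and split $\D$ according to whether $|f(z)|\le 1$ or $|f(z)|>1$. On the set where $|f|\le1$ we have, using Lemma~\ref{GrowthPsi}, that $\Psi(|f(z)|)\le\Psi(0)\cdot C_1(\log(e+0))^{C_2}$-type bound is the wrong direction; instead, on $\{|f|\le1\}$ the integral $\int_{\{|f|\le1\}}|f|^{p-\e}\om\,dA\le\om(\D)<\infty$ contributes the additive $+1$ on the right-hand side of \eqref{growth}. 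On the set where $|f|>1$, I would use that by Lemma~\ref{GrowthPsi}, $\Psi(x)\ge c_1(\log(e+x))^{c_2}$; combined with the decay control (the reverse estimate from Lemma~\ref{GrowthPsi}) this shows that $x^{p-\e}\lesssim x^p\Psi(x)$ for $x\ge1$, because any logarithmic decay of $\Psi$ is beaten by the $\e$-power gain $x^{-\e}$. Concretely, $x^{p-\e}=x^p\Psi(x)\cdot\bigl(x^{-\e}/\Psi(x)\bigr)$ and $x^{-\e}/\Psi(x)\lesssim x^{-\e}(\log(e+x))^{-c_2}$ is bounded on $[1,\infty)$ since $\e>0$. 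Therefore $\int_{\{|f|>1\}}|f|^{p-\e}\om\,dA\lesssim\int_{\{|f|>1\}}|f|^p\Psi(|f|)\om\,dA\le\nm{f}^p_{A^p_{\om,\Psi}}$.

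Combining the two pieces gives $\int_\D|f|^{p-\e}\om\,dA\lesssim\nm{f}^p_{A^p_{\om,\Psi}}+1$, i.e. $f$ lies in $A^{p-\e}_\om$ with norm controlled by the right-hand side of \eqref{growth}. Then applying the standard $A^{p-\e}_\om$ growth estimate quoted above (valid since $\om\in\DD$, and with exponent $p-\e$ in place of $p$) yields
$$
|f(z)|^{p-\e}\lesssim\frac{\nm{f}^{p-\e}_{A^{p-\e}_\om}}{\om(S(z))}\lesssim\frac{\nm{f}^p_{A^p_{\om,\Psi}}+1}{\om(S(z))},
$$
which is exactly \eqref{growth}. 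I expect the main obstacle to be the non-essentially-monotone case — a function in $\LL$ need not be monotone — so the clean dichotomy "increasing vs.\ decreasing" above is not exhaustive. To handle a general $\Psi\in\LL$ I would not split on monotonicity at all but instead argue uniformly via Lemma~\ref{GrowthPsi}: on $\{|f|>1\}$ use the lower bound $\Psi(|f|)\gtrsim(\log(e+|f|))^{c_2}$ (which holds for every $\Psi\in\LL$ regardless of monotonicity), so that $|f|^{p-\e}\lesssim|f|^p(\log(e+|f|))^{c_2}\cdot|f|^{-\e}\lesssim|f|^p\Psi(|f|)$ for $|f|\ge1$; on $\{|f|\le1\}$ the integral is again bounded by $\om(\D)$. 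This makes the argument work for all of $\LL$ and is where the loss of the $\e$ in the exponent is genuinely needed, since without it the logarithmic factor $(\log(e+|f|))^{c_2}$ with $c_2<0$ could not be compensated.
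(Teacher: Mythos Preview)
Your proposal is correct and follows essentially the same route as the paper: reduce to showing $\int_\D|f|^{p-\veps}\om\,dA\lesssim\nm{f}^p_{A^p_{\om,\Psi}}+1$, split on $\{|f|\le1\}$ versus $\{|f|>1\}$, and on the latter set use that $\Psi$ has at worst power-logarithmic decay so that the lost $\veps$ in the exponent absorbs $1/\Psi(|f|)$. The paper does the last step via Lemma~\ref{ThetaIncreasing} (since $x^\veps\Psi(x)$ is essentially increasing, $\Psi(x)\gtrsim x^{-\veps}$ for $x\ge1$), while you use Lemma~\ref{GrowthPsi} directly; these are equivalent here.

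Two small corrections. First, your justification of the standard $A^{p-\veps}_\om$ growth estimate is not valid under the hypothesis $\om\in\DD$ alone: the equivalence $\om(S(z))\asymp\om(\Delta(z,r))$ requires $\om\in\DDD$, not merely $\om\in\DD$. The result you quote is nevertheless true for $\om\in\DD$, but it is proved via integral means (the route $|f(z)|\le M_\infty(|z|,f)\lesssim M_{p-\veps}(\tfrac{1+|z|}{2},f)/(1-|z|)^{1/(p-\veps)}$ together with monotonicity of $r\mapsto M_{p-\veps}(r,f)$ and the doubling of $\widehat{\om}$), which is exactly what the paper does inline. Second, your concern about ``non-essentially-monotone'' $\Psi$ is unnecessary: membership in $\LL$ already requires essential monotonicity, so the increasing/decreasing dichotomy is exhaustive. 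Your unified argument via Lemma~\ref{GrowthPsi} is fine regardless, and indeed the paper notes that in the essentially increasing case one may take $\veps=0$ and drop the $+1$.
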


\begin{proof}
It follows from \cite[Lemma~1.3]{P1} that
		$$
		M_{\infty}(r,f)\lesssim \frac{M_{p}(\frac{1+r}{2},f)}{(1-r)^{\frac{1}{p}}},\quad f \in \mathcal{H}(\D),\quad 0\le r<1.
		$$
This estimate together with Lemma~\ref{ThetaIncreasing} and the hypotheses $\Psi\in\LL$ and $\om\in\DD$ yields
		\begin{align*}
    |f(z)|^{p-\veps}\omega(S(z))
		&\lesssim M_{\infty}^{p-\veps}(|z|,f)(1-|z|)\widehat{\omega}(z)
		\lesssim M_{p-\veps}^{p-\veps}\left(\frac{1+|z|}{2}\right)\widehat{\omega}\left(\frac{1+|z|}{2}\right) \\
    &\lesssim\int_{\frac{1+|z|}{2}}^{1}M_{p-\veps}^{p-\veps}(r,f) \omega(r)r\,dr
		\le\int_\D |f(\zeta)|^{p-\veps}\om(\zeta)\,dA(\zeta) \\
    &=\int_{|f|\leq 1}|f(\zeta)|^{p-\veps}\om(\zeta)\,dA(\zeta)
		+\int_{|f|>1}|f(\zeta)|^{p-\veps}\frac{\Psi(|f(\zeta)|)}{\Psi(|f(\zeta)|)}\om(\zeta)\,dA(\zeta)\\
    &\lesssim 1 + \int_{|f|>1}|f(\zeta)|^{p}\Psi(|f(\zeta)|)\om(\zeta)\,dA(\zeta)
		\le1 + \lVert f \rVert^p_{A_{\omega,\Psi}^{p}}.
		\end{align*}
For the case where $\Psi$ is essentially increasing, we not need to use Lemma~\ref{ThetaIncreasing}, and we can actually choose $\veps=0$ and remove the added constant $1$.
\end{proof}

Next, we show that the quantity that defines the weighted Lebesgue-Zygmund space satisfies the quasi triangular inequality and the scalar multiplication is continuous. Observe that we do not require absolute homogeneity for a set to be a quasinormed vector space.

\begin{proposition}\label{quasinorm}
Let $0<p<\infty$ and $\Psi\in\LL$, and let $\mu$ be a finite positive Borel measure on $\D$. Then the space $L_{\mu,\Psi}^{p}$ is a quasinormed topological vector space with the quasinorm
    $$
    \lVert f \rVert_{L_{\mu,\Psi}^{p}} = \left(\int_{\D}|f|^p\Psi(|f|)d\mu\right)^{\frac1p}.
    $$
\end{proposition}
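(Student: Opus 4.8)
The plan is to express the functional as $\|f\|_{L^p_{\mu,\Psi}}^p=\int_\D\Theta(|f|)\,d\mu$, where $\Theta(x)=x^p\Psi(x)$ is essentially increasing on $[0,\infty)$ by Lemma~\ref{ThetaIncreasing}, and then to combine this with two elementary facts about $\Psi\in\LL$. First, as already observed in the proof of Lemma~\ref{GrowthPsi}, $\Psi$ is comparable to a positive constant on every bounded interval; in particular $\sup_{[0,1]}\Psi<\infty$, so $\Theta(t)\to0$ as $t\to0^+$. Second, $\Psi(Ct)\asymp\Psi(t)$ for each fixed $C>0$ by Proposition~\ref{properties}(i) (since $Ct\asymp t$), hence also $\Theta(Ct)\asymp\Theta(t)$. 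With these in hand there are three things to verify: that $\|\cdot\|_{L^p_{\mu,\Psi}}$ is definite, that it satisfies a quasi-triangle inequality, and that the vector space operations are continuous for the topology it generates.

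Definiteness is immediate: since $\Psi>0$, $\|f\|_{L^p_{\mu,\Psi}}=0$ forces $\Theta(|f|)=0$ $\mu$-a.e., i.e. $f=0$ in $L^p_{\mu,\Psi}$. For the quasi-triangle inequality I would first establish the pointwise bound $\Theta(x+y)\lesssim\Theta(x)+\Theta(y)$ for $x,y\ge0$: using $x+y\le 2\max\{x,y\}$ and the essential monotonicity of $\Theta$ gives $\Theta(x+y)\lesssim\Theta(2\max\{x,y\})\asymp\Theta(\max\{x,y\})\le\Theta(x)+\Theta(y)$, the last inequality being trivial from $\Theta\ge0$. Applying this with $x=|f(z)|$, $y=|g(z)|$, together with $|f+g|\le|f|+|g|$ and essential monotonicity of $\Theta$ once more, and integrating against $\mu$, yields $\|f+g\|_{L^p_{\mu,\Psi}}^p\lesssim\|f\|_{L^p_{\mu,\Psi}}^p+\|g\|_{L^p_{\mu,\Psi}}^p$, and hence the quasi-triangle inequality for $\|\cdot\|_{L^p_{\mu,\Psi}}$ itself; in particular $L^p_{\mu,\Psi}$ is closed under addition and addition is continuous.

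It remains to treat scalar multiplication, which is the only genuinely delicate point because absolute homogeneity fails. Using the quasi-triangle inequality at an arbitrary base point $(\lambda_0,f_0)$ I would reduce continuity of $(\lambda,f)\mapsto\lambda f$ to two claims: (1) $\|\veps f\|_{L^p_{\mu,\Psi}}\to0$ as $\veps\to0$ for each fixed $f\in L^p_{\mu,\Psi}$, and (2) $\|\lambda_0 h\|_{L^p_{\mu,\Psi}}\to0$ whenever $\|h\|_{L^p_{\mu,\Psi}}\to0$, for each fixed scalar $\lambda_0$. Claim (1) I would prove by dominated convergence: for $|\veps|\le1$ the integrand $\Theta(|\veps|\,|f|)$ is dominated by the $\mu$-integrable function $C\,\Theta(|f|)$ (essential monotonicity of $\Theta$), and it tends to $0$ pointwise because $\Theta(t)\to0$ as $t\to0^+$. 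Claim (2) is trivial for $\lambda_0=0$ and, for $\lambda_0\ne0$, follows from the almost-homogeneity $\|\lambda_0 h\|_{L^p_{\mu,\Psi}}^p\asymp|\lambda_0|^p\|h\|_{L^p_{\mu,\Psi}}^p$, which is a consequence of $\Psi(|\lambda_0|t)\asymp\Psi(t)$ (Proposition~\ref{properties}(i), with constants depending on $\lambda_0$); the same estimate also shows $L^p_{\mu,\Psi}$ is closed under scalar multiplication. The main obstacle is precisely this separation into two regimes: one cannot pull scalars through $\|\cdot\|_{L^p_{\mu,\Psi}}$, so small scalars must be handled by monotonicity of $\Theta$ and dominated convergence (the comparison constant in $\Psi(\veps t)\asymp\Psi(t)$ degenerates as $\veps\to0$), while scalars near a fixed nonzero value are handled by the almost-homogeneity. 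Once (1) and (2) are combined with the quasi-triangle inequality, joint continuity of scalar multiplication and of addition follows by routine estimates, completing the verification that $L^p_{\mu,\Psi}$ is a quasinormed topological vector space.
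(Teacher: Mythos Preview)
Your argument is correct. The quasi-triangle step is essentially the paper's: both rely on $\Theta(t)=t^p\Psi(t)$ being essentially increasing (Lemma~\ref{ThetaIncreasing}) together with $\Psi(2t)\asymp\Psi(t)$ to get $\Theta(|f+g|)\lesssim\Theta(|f|)+\Theta(|g|)$ pointwise.

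Where you diverge from the paper is in the treatment of scalar multiplication. The paper obtains an explicit uniform bound of the form $\|cf\|^p_{L^p_{\mu,\Psi}}\lesssim G(|c|)\,\|f\|^p_{L^p_{\mu,\Psi}}$ by splitting into the cases $\Psi$ essentially increasing and $\Psi$ essentially decreasing, and in each case controlling $\Psi(|c|t)/\Psi(t)$ by hand via dyadic iteration of the doubling condition (producing factors such as $C^{\log_2|c|}$ or $C^{\log_2\log_2(1/|c|)}$). You instead avoid the monotonicity case split entirely: for scalars near a fixed $\lambda_0\neq0$ you invoke Proposition~\ref{properties}(i) once to get $\Psi(|\lambda_0|t)\asymp\Psi(t)$, and for the degenerate limit $\veps\to0$ you appeal to dominated convergence using the $\mu$-integrable majorant $C\,\Theta(|f|)$ and $\Theta(t)\to0$ as $t\to0^+$. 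Both routes are valid; the paper's yields a quantitative modulus $G(|c|)$ at the cost of a case analysis, while your argument is shorter and more conceptual but non-quantitative in the small-scalar regime. One small point you leave implicit in ``routine estimates'': when handling the cross term $\|\lambda(f-f_0)\|$ with $\lambda$ varying near $\lambda_0$, you need the comparison constants in $\Psi(|\lambda|t)\asymp\Psi(t)$ to be locally uniform in $\lambda$ away from $0$ (which they are, by Proposition~\ref{properties}(i)), and near $\lambda_0=0$ you fall back on essential monotonicity of $\Theta$; this is exactly what the paper's explicit bound $G(|c|)$ encodes.
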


\begin{proof}
To prove that $\lVert \, \cdotp \rVert_{L_{\mu,\Psi}^{p}}$ is a quasinorm, we only need to show that
	$$
	\lVert f+g  \rVert_{L_{\mu,\Psi}^{p}}
	\lesssim\lVert f \rVert_{L_{\mu,\Psi}^{p}}+\lVert g \rVert_{L_{\mu,\Psi}^{p}}.
	$$
By symmetry, we may assume that $|g|\le|f|$. Then, by Lemma~\ref{ThetaIncreasing} and Proposition~\ref{properties}, we have
    \begin{align*}
    |f+g|^p\Psi(|f+g|) \lesssim 2^p|f|^p\Psi(|f|) \lesssim |f|^p\Psi(|f|)+|g|^p\Psi(|g|).
    \end{align*}
This clearly gives us
    \begin{align*}
     \lVert f+g \rVert_{L_{\mu,\Psi}^{p}} &= \left(\int_{\D}|f+g|^p\Psi(|f+g|)\,d\mu\right)^{\frac1p}\\
        &\leq (2C)^{\frac1p}\left(\left(\int_{\D}|f|^p\Psi(|f|)\,d\mu\right)^{\frac1p}+\left(\int_{\D}|g|^p\Psi(|g|)\,d\mu\right)^{\frac1p}\right)\\
        &\leq (2C)^{\frac1p}\left(\lVert f \rVert_{L_{\mu,\Psi}^{p}}+\lVert g \rVert_{L_{\mu,\Psi}^{p}} \right)
    \end{align*}
for some constant $C=C(p,\Psi)>0$. Thus $\lVert \, \cdotp \rVert_{L_{\mu,\Psi}^{p}}$ satisfies the quasi-triangle inequality. This trivially implies that addition is continuous, so it remains to show that the scalar multiplication is continuous. Assume first that $\Psi$ is essentially increasing. When $|c|<1$, we have $\Psi(|cf|)\lesssim \Psi(|f|)$. If $|c|>1$, choose $k \in \N\cup\{0\}$ such that $2^k\leq |c| < 2^{k+1}$. Then Proposition~\ref{properties}(i) implies
    $$
    \Psi(|cf|)\lesssim \Psi(2^{k+1}|f|) \leq C^k \Psi(|f|) < C^{\log_2 |c|}\Psi(|f|),
    $$
which gives us
    $$
     \lVert cf \rVert^p_{L_{\mu,\Psi}^{p}}\lesssim |c|^p(1+C^{\log_2 |c|})\lVert f \rVert^p_{L_{\mu,\Psi}^{p}}.
     $$
     Assume next that $\Psi$ is essentially decreasing. When $|c|\geq 1$, we clearly have $\Psi(|cf|)\lesssim \Psi(|f|)$. When $0<|c|<1$, we have {$2^{-(k+1)2^{k+1}}\leq |c|<2^{-k2^{k}}$} for some $k \in \N\cup\{0\}$. Then by definition of class $\LL$ we have
    $$
     \Psi(|cf|) \lesssim \Psi(2^{-(k+1)2^{k+1}}|f|) \lesssim C^{k}\Psi(|f|) \lesssim C^{\log_2 \log_2 \frac{1}{|c|}} \Psi(|f|),
     $$
     giving us
     $$
\lVert cf \rVert^p_{L_{\mu,\Psi}^{p}}\lesssim |c|^p(1+C^{\log_2 \log_2 \frac{1}{|c|}})\lVert f \rVert^p_{L_{\mu,\Psi}^{p}}
     $$
     and the statement follows.
    \end{proof}

The following characterization for the compactness on the weighted Bergman space setting is well-known but we include a proof for the convenience of the readers.

\begin{proposition}\label{cptseq}
Let $0<p\le q<\infty$, $\omega \in \mathcal{D}$ and $\Phi,\Psi\in\LL$.
Then the canonical inclusion map $I: A^p_{\om,\Psi}\to L^q_{\mu,\Phi}$ is compact if and
only if $\lim_{n\to\infty} \|f_n\|_{L^q_{\mu,\Phi}}=0$ for each bounded sequence $\{f_n\}$ in $A^p_{\om,\Psi}$ that converges to 0
uniformly on compact subsets on $\D$.
\end{proposition}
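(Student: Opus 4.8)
The plan is to prove the standard equivalence between compactness of the inclusion map and the sequential criterion, adapting the classical Bergman-space argument (as in \cite{Tjani} or \cite{Zhu}) to the Zygmund setting. The key structural fact we rely on is the growth estimate of Lemma~\ref{Growthf}, which guarantees that a bounded set in $A^p_{\om,\Psi}$ is a normal family, together with the fact (Proposition~\ref{quasinorm}) that $L^q_{\mu,\Phi}$ is a quasinormed topological vector space so that the usual metric-space characterizations of compactness apply.

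First I would prove the easier direction: assume $I$ is compact and let $\{f_n\}$ be a bounded sequence in $A^p_{\om,\Psi}$ converging to $0$ uniformly on compact subsets of $\D$. Since $\{f_n\}$ lies in a bounded (hence, by compactness of $I$, relatively compact) subset of $L^q_{\mu,\Phi}$, every subsequence of $\{I f_n\}=\{f_n\}$ has a further subsequence converging in $L^q_{\mu,\Phi}$ to some limit. I would argue that this limit must be $0$: convergence in $L^q_{\mu,\Phi}$ forces, along a further subsequence, $\mu$-a.e.\ pointwise convergence of $|f_{n_k}|^q\Phi(|f_{n_k}|)$, and since $f_{n_k}\to0$ uniformly on compacta it converges to $0$ pointwise on all of $\D$; hence the $L^q_{\mu,\Phi}$-limit is $0$. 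As every subsequence has a further subsequence with $\|f_{n_k}\|_{L^q_{\mu,\Phi}}\to0$, the whole sequence satisfies $\|f_n\|_{L^q_{\mu,\Phi}}\to0$.

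For the converse, assume the sequential condition holds and take a sequence $\{h_n\}$ in the closed unit ball (or any bounded set) of $A^p_{\om,\Psi}$; I must show $\{I h_n\}$ has a convergent subsequence in $L^q_{\mu,\Phi}$. By Lemma~\ref{Growthf}, $\{h_n\}$ is uniformly bounded on each compact subset of $\D$, so by Montel's theorem there is a subsequence $\{h_{n_k}\}$ converging uniformly on compacta to some $h\in\H(\D)$. A routine application of Fatou's lemma to $\int_\D |h_{n_k}|^p\Psi(|h_{n_k}|)\om\,dA$, using Proposition~\ref{properties}(i) for the continuity-type behaviour of $\Psi$ along the pointwise limit, shows $h\in A^p_{\om,\Psi}$ with comparable norm bound, so $\{h_{n_k}-h\}$ is again bounded in $A^p_{\om,\Psi}$ and converges to $0$ uniformly on compacta. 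The hypothesis then gives $\|h_{n_k}-h\|_{L^q_{\mu,\Phi}}\to0$, i.e.\ $I h_{n_k}\to I h$ in $L^q_{\mu,\Phi}$, which is the required relative compactness.

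I expect the main obstacle to be purely bookkeeping rather than conceptual: one must be careful that $L^q_{\mu,\Phi}$ is only a quasinormed space and that $\|\cdot\|_{L^q_{\mu,\Phi}}$ is not absolutely homogeneous, so arguments phrased in terms of neighbourhoods of the origin and the quasi-triangle inequality (with its constant from Proposition~\ref{quasinorm}) must replace literal norm estimates; also, extracting the $\mu$-a.e.\ convergent subsequence to identify limits requires the standard measure-theoretic passage from $L^q_{\mu,\Phi}$-convergence to pointwise convergence, which is legitimate since $|f|^q\Phi(|f|)$ controls $|f|^q$ up to the logarithmic factors bounded in Lemma~\ref{GrowthPsi}. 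None of these steps is deep, so the proof should be short once the normal-family input from Lemma~\ref{Growthf} and the topological-vector-space structure from Proposition~\ref{quasinorm} are in place.
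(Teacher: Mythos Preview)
Your proposal is correct and follows essentially the same route as the paper: in both directions the argument is the standard subsequence/Montel scheme, with Lemma~\ref{Growthf} providing the normal-family input and Proposition~\ref{quasinorm} supplying the quasi-triangle inequality. The only cosmetic difference is that, to show the Montel limit $h$ lies in $A^p_{\om,\Psi}$, the paper applies the quasi-triangle inequality on the discs $D(0,r)$ and lets $r\to1$, whereas you invoke Fatou directly on $|h_{n_k}|^p\Psi(|h_{n_k}|)$ together with Proposition~\ref{properties}(i); both are routine and yield the same conclusion.
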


\begin{proof}
Suppose $I$ is compact and $\{f_n\}$ is a bounded sequence in $A^p_{\om,\Psi}$ that converges to $0$ uniformly on compact subsets on $\D$.
Let $\{g_n\}$ be a subsequence of $\{f_n\}$. Since $I$ is compact, there exists a subsequence $\{I(g_{n_k})\}$ converging to $G$
in $L^q_{\mu,\Phi}$. Note that $G=0$ since $I(g_{nk})=g_{n_k}$ and $\{g_n\}$ converges to $0$ uniformly on compact subsets on $\D$.
Therefore every subsequence of $\{f_n\}$ has a subsequence converging to $0$ in $L^q_{\mu,\Phi}$. This implies that $\{f_n\}$ itself converges to $0$ in $L^q_{\mu,\Phi}$.

For the sufficiency, let $\|f_n\|_{A^p_{\om,\Psi}}\le M_0$ for some $M_0$.
Then $\{f_n\}$ is uniformly bounded on compact subsets of $\D$ by Lemma \ref{Growthf}.
Therefore Montel's theorem guarantees the existence of a subsequence $\{f_{n_k}\}$ which converges uniformly on compact subsets of $\mathbb D$ to some analytic function $f$. By Proposition~\ref{quasinorm}, for each $r\in (0,1)$, we have
\begin{align*}
\int_{D(0,r)} |f|^p\Psi(|f|)\om\, dA
\le C \int_{D(0,r)} |f-f_{n_k}|^p\Psi(|f-f_{n_k}|)\om\, dA
 +C \|f_{n_k}\|_{A^p_{\om,\Psi}}^p
\le C_1
\end{align*}
for some $C_1=C_1(p, \om, \Psi, M_0)$ taking $n_k$ sufficiently large.
Therefore $f\in A^p_{\om,\Psi}$ by Fatou's lemma.
Since the subsequence converges uniformly on compact subsets, by the assumption $\displaystyle \lim_{n\to\infty}\|I(f-f_{n_k})\|_{L^q_{\mu,\Phi}}=0$.
Hence $I: A^p_{\om,\Psi}\to L^q_{\mu,\Phi}$ is compact.
\end{proof}

The following lemma contains useful characterizations of weights in the class $\DD$. For a proof, see \cite[Lemma~2.1]{PelSum14}, for example.
\begin{letterlemma}\label{Dhat}
Let $\om$ be a radial weight. Then the following statements are equivalent:
		\begin{itemize}
		\item[(i)] $\om\in\DD$;
		\item[(ii)] There exist $C=C(\om)>0$ and $\beta_0=\beta_0(\om)>0$ such that
		$$
		\widehat{\om}(r)\leq C\left(\frac{1-r}{1-t}\right)^{\beta}\widehat{\om}(t),\quad 0\leq r\leq t<1,
		$$
for all $\beta\geq\beta_0$;
		\item[(iii)] There exists $\lambda=\lambda(\omega) \geq 0$ such that
		$$
		\int_{\mathbb{D}} \frac{\omega(z)}{|1-\bar{\zeta} z|^{\lambda+1}} d A(z) \asymp \frac{\widehat{\omega}(\zeta)}{(1-|\zeta|)^\lambda}, \quad \zeta \in \mathbb{D} .
		$$
		\end{itemize}
\end{letterlemma}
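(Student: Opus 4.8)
The plan is to establish the cycle $(i)\Rightarrow(ii)\Rightarrow(iii)\Rightarrow(i)$; note that $(ii)\Rightarrow(i)$ is in any case immediate on taking $t=\frac{1+r}{2}$, which makes $\frac{1-r}{1-t}=2$. For $(i)\Rightarrow(ii)$ I would iterate the doubling condition: writing $1-r_n=2^{-n}(1-r)$, repeated use of $\widehat{\om}(r)\le C\widehat{\om}(\frac{1+r}{2})$ gives $\widehat{\om}(r)\le C^{n}\widehat{\om}(r_n)$ for every $n\in\N$, with $C>1$ because $\widehat{\om}(r)\to0$ as $r\to1^{-}$ rules out $C=1$. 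Given $0\le r\le t<1$, choosing the least $n$ with $2^{n}\ge\frac{1-r}{1-t}$ forces $r_n\ge t$, so monotonicity of $\widehat{\om}$ yields $\widehat{\om}(r)\le C^{n}\widehat{\om}(t)\le C\bigl(\frac{1-r}{1-t}\bigr)^{\log_2 C}\widehat{\om}(t)$; since $\frac{1-r}{1-t}\ge1$, the same holds with any exponent $\beta\ge\beta_0:=\log_2 C$, which is $(ii)$.

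For $(ii)\Rightarrow(iii)$ I would fix $\lambda>\beta_0$ and first reduce to a one-dimensional integral. In polar coordinates, and using the classical asymptotics $\int_0^{2\pi}|1-\rho e^{i\theta}|^{-(\lambda+1)}\,d\theta\asymp(1-\rho)^{-\lambda}$ (valid since $\lambda>0$), one obtains
\[
\int_{\D}\frac{\om(z)}{|1-\overline{\z}z|^{\lambda+1}}\,dA(z)\asymp\int_0^1\frac{\om(s)\,s}{(1-|\z|s)^{\lambda}}\,ds .
\]
The lower bound $\gtrsim\widehat{\om}(\z)(1-|\z|)^{-\lambda}$ comes from restricting the $s$-integral to $[|\z|,1]$, where $1-|\z|s\le2(1-|\z|)$, the range $|\z|<\frac12$ being dispatched by comparing both sides with $\widehat{\om}(0)$ and using $(i)$. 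The upper bound is the crux: since $1-|\z|s\asymp(1-|\z|)+(1-s)$, the part of the integral over $[|\z|,1]$ is at once $\lesssim\widehat{\om}(\z)(1-|\z|)^{-\lambda}$, while the part over $[0,|\z|]$, where $1-|\z|s\asymp1-s$, is split into the dyadic rings $1-s\asymp2^{-n}$, $0\le n\le N$, with $2^{-N}\asymp1-|\z|$. On such a ring the integral is $\lesssim2^{n\lambda}\widehat{\om}(1-2^{-n})$, and $(ii)$ applied with the exponent $\beta_0<\lambda$ gives $\widehat{\om}(1-2^{-n})\lesssim2^{(N-n)\beta_0}\widehat{\om}(\z)$; hence the series $\sum_{n=0}^{N}2^{n\lambda}2^{(N-n)\beta_0}\widehat{\om}(\z)=2^{N\beta_0}\widehat{\om}(\z)\sum_{n=0}^{N}2^{n(\lambda-\beta_0)}$ is dominated by its term $n=N$ and reproduces $\widehat{\om}(\z)(1-|\z|)^{-\lambda}$ up to a constant.

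For $(iii)\Rightarrow(i)$ only the upper half of the equivalence in $(iii)$ is needed, together with the elementary lower bound
\[
\int_{\D}\frac{\om(z)}{|1-\overline{\z}z|^{\lambda+1}}\,dA(z)\gtrsim\frac{\widehat{\om}(r)}{(1-|\z|r)^{\lambda}},\qquad\tfrac12\le r\le|\z|<1 ,
\]
obtained by discarding the part of the integral with $|z|<r$, integrating in the angle, and using $1-|\z|s\le1-|\z|r$ on $[r,1]$. Combining the two displays gives $\widehat{\om}(r)\lesssim\bigl(\frac{1-|\z|r}{1-|\z|}\bigr)^{\lambda}\widehat{\om}(\z)$, and the choice $|\z|=\frac{1+r}{2}$ turns the ratio into $2+r\le3$, i.e.\ $\widehat{\om}(r)\lesssim\widehat{\om}(\frac{1+r}{2})$ for $r\ge\frac12$; the range $r<\frac12$ is trivial since $\widehat{\om}$ is positive and non-increasing. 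I expect the dyadic-ring argument in $(ii)\Rightarrow(iii)$ — in particular arranging $\lambda>\beta_0$ so that the geometric sum is governed by its last term — to be the only genuinely substantial step, the angular integrations and the iteration in $(i)\Rightarrow(ii)$ being routine.
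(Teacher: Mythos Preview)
Your argument is correct and follows the standard route: the iteration in $(i)\Rightarrow(ii)$, the reduction to a radial integral via the classical angular estimate $\int_0^{2\pi}|1-\rho e^{i\theta}|^{-(\lambda+1)}\,d\theta\asymp(1-\rho)^{-\lambda}$ together with the dyadic decomposition in $(ii)\Rightarrow(iii)$, and the test of the upper bound in $(iii)$ against the trivial lower bound in $(iii)\Rightarrow(i)$ are all carried out properly. The only places where a reader might pause are minor bookkeeping issues (e.g.\ the factor of $s$ in $\int_r^1\om(s)s\,ds$ when $r<\tfrac12$, or the off-by-one index in the dyadic rings), but these do not affect the argument.

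Note, however, that the paper does not give its own proof of this lemma: it is stated as a lettered lemma and simply referred to \cite[Lemma~2.1]{PelSum14}. So there is no in-paper proof to compare against; your approach is essentially the one found in the cited reference and in the literature on doubling weights more generally.
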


With the aid of Lemma~\ref{Dhat} we can now construct suitable testing functions for the proof of the main result.

\begin{lemma}\label{fa in Ap}
Let $0<p<\infty$, $\omega \in \widehat{\mathcal{D}}$ and $\Psi\in\LL$. Then there exists $\gamma_0=\gamma_0(\omega,\Psi)$ such that, for each $\gamma > \gamma_0$, the function
    $$
    f_a(z)=\left(\left(\frac{1-|a|}{1-\overline{a}z}\right)^\gamma \frac{1}{\omega(S(a))\Psi(\frac{1}{\omega(S(a))})}\right)^{\frac1{p}},\quad z\in\D,
    $$
belongs to $A_{\omega,\Psi}^{p}$ and $\lVert f_a \rVert^p_{A_{\omega,\Psi}^{p}} \asymp 1$ for all $a\in\D$.
\end{lemma}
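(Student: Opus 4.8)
The plan is to estimate $\lVert f_a \rVert^p_{A_{\omega,\Psi}^{p}}$ directly from the definition, splitting the integral according to the size of $|f_a|$. Write $K=K(a)=\omega(S(a))\Psi\bigl(\tfrac{1}{\omega(S(a))}\bigr)$, so that $|f_a(z)|^p = \bigl(\tfrac{1-|a|}{|1-\overline a z|}\bigr)^{\gamma}\tfrac{1}{K}$. First I would record the elementary pointwise bound $|f_a(z)|^p \le \tfrac{C}{K}\cdot\tfrac{1}{(1-|a|)^{\gamma}}$ coming from $|1-\overline a z|\ge 1-|a|$, together with the lower bound $|f_a(z)|^p \gtrsim \tfrac{1}{K}$ on a fixed pseudohyperbolic disc $\Delta(a,r)$, where $|1-\overline a z|\asymp 1-|a|$. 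Since $\omega\in\DD$ implies $\omega(S(a))\gtrsim (1-|a|)^{\beta}$ for $\beta$ large enough (this is one direction of Lemma~\ref{Dhat}(ii) rewritten with $\omega(S(a))\asymp(1-|a|)\widehat\omega(a)$), and since Lemma~\ref{GrowthPsi} controls $\Psi$ between two powers of a logarithm, one deduces that $K$ is bounded above and below by fixed powers of $1-|a|$, hence $|f_a|$ is bounded on $\D$ by a quantity depending only on $\gamma,\omega,\Psi$ — uniformly in $a$ once we agree $|a|$ is, say, bounded away from $0$ (the case $|a|$ small is trivial since $f_a$ and all relevant quantities are then comparable to constants).

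The next step is to exploit essential monotonicity of $\Psi$ to reduce $\Psi(|f_a|)$ to something harmless. Because $|f_a|$ ranges in a set of the form $[c_1 K^{-1/p}, C_1 K^{-1/p}(1-|a|)^{-\gamma/p}]$ and because $K^{-1}\asymp (1-|a|)^{-s}$ for some fixed $s=s(\omega,\Psi)>0$, the argument of $\Psi$ stays between two fixed powers of $1-|a|$; then Lemma~\ref{GrowthPsi} and Proposition~\ref{properties}(i)--(ii) give $\Psi(|f_a(z)|)\asymp \Psi\bigl(\tfrac{1}{\omega(S(a))}\bigr)$ uniformly in $z\in\D$ and $a\in\D$ — indeed all these quantities are comparable to a fixed power of $\log\tfrac{e}{1-|a|}$. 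This is the key simplification: it lets us pull the $\Psi$-factor out of the integral. Consequently
\begin{align*}
\lVert f_a \rVert^p_{A_{\omega,\Psi}^{p}}
&= \int_{\D} |f_a|^p \Psi(|f_a|)\,\omega\,dA
\asymp \frac{\Psi\bigl(\tfrac{1}{\omega(S(a))}\bigr)}{\omega(S(a))\Psi\bigl(\tfrac{1}{\omega(S(a))}\bigr)}
\int_{\D}\Bigl(\frac{1-|a|}{|1-\overline a z|}\Bigr)^{\gamma}\omega(z)\,dA(z).
\end{align*}

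It remains to show $\int_{\D}\bigl(\tfrac{1-|a|}{|1-\overline a z|}\bigr)^{\gamma}\omega(z)\,dA(z)\asymp \omega(S(a))$ for $\gamma$ large. The upper bound is Lemma~\ref{Dhat}(iii) with $\lambda=\gamma-1$ (valid once $\gamma-1\ge\lambda(\omega)$), combined with $\widehat\omega(a)(1-|a|)\asymp\omega(S(a))$, which is standard for $\om\in\DD$; this fixes the threshold $\gamma_0$. The lower bound is immediate by integrating over $\Delta(a,r)$, where the integrand is $\asymp 1$ and $\omega(\Delta(a,r))\asymp\omega(S(a))$ — again a consequence of $\om\in\DD$ and the usual geometry of Carleson squares versus pseudohyperbolic discs. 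Putting the three pieces together yields $\lVert f_a \rVert^p_{A_{\omega,\Psi}^{p}}\asymp 1$, with implied constants depending only on $p,\omega,\Psi,\gamma$. The main obstacle, and the only nonroutine point, is justifying that $\Psi(|f_a|)$ is genuinely comparable to $\Psi(1/\omega(S(a)))$ uniformly over \emph{all} $z$ in the integration domain: one must be careful that the lower end $|f_a(z)|\asymp K^{-1/p}$ near $\partial\Delta(a,r)$ and the upper end near $z=a/|a|$ both produce the same value of $\Psi$ up to constants, which is exactly where the doubling property \eqref{doubling} of $\Psi$ (equivalently the logarithmic two-sided bound of Lemma~\ref{GrowthPsi}) is indispensable — essential monotonicity alone would not suffice.
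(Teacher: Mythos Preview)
Your argument has a genuine gap at the step where you claim $\Psi(|f_a(z)|)\asymp\Psi\bigl(\tfrac{1}{\omega(S(a))}\bigr)$ uniformly in $z\in\D$. First, the range you write for $|f_a|$ is inverted: since $|1-\overline{a}z|\ge 1-|a|$, the kernel satisfies $\tfrac{1-|a|}{|1-\overline{a}z|}\le 1$, so the \emph{maximum} of $|f_a|^p$ over $\D$ is $K^{-1}$ (attained near $z=a$), while the \emph{infimum} is $\asymp(1-|a|)^{\gamma}K^{-1}$, attained where $|1-\overline{a}z|\asymp 1$ (e.g.\ $z=0$). Second, and more seriously, for $\gamma$ large this infimum is a \emph{positive} power of $1-|a|$, hence tends to $0$, so $\Psi(|f_a(z)|)\to\Psi(0)$, a fixed finite positive constant. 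Meanwhile $\Psi\bigl(\tfrac{1}{\omega(S(a))}\bigr)$ is, by Lemma~\ref{GrowthPsi}, comparable to a nonzero power of $\log\tfrac{e}{1-|a|}$ and therefore is \emph{not} bounded above and below as $|a|\to1^-$. Proposition~\ref{properties}(ii) lets you pass between two fixed \emph{positive} powers of the same quantity, but it does not let you compare $\Psi$ at arguments tending to $0$ with $\Psi$ at arguments tending to $\infty$. Concretely, for $\Psi(x)=\log(e+x)$ one has $\Psi(|f_a(0)|)\to 1$ while $\Psi\bigl(\tfrac{1}{\omega(S(a))}\bigr)\to\infty$.

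This is exactly why the paper cannot simply pull $\Psi$ out of the integral. The lower bound works as you indicate (restricting to $S(a)$, where $|f_a|\asymp K^{-1/p}$ and your comparison of $\Psi$-values is valid; note that the paper uses $S(a)$ rather than $\Delta(a,r)$, since $\omega(\Delta(a,r))\asymp\omega(S(a))$ generally requires $\omega\in\Dd$ as well). For the upper bound, however, the paper must split into two cases: if $\Psi$ is essentially increasing one uses $\Psi(|f_a|)\lesssim\Psi(K^{-1})$ globally and then Lemma~\ref{Dhat}(iii); if $\Psi$ is essentially decreasing one needs a dyadic decomposition of $\D$ into the annuli $S_k(a)\setminus S_{k-1}(a)$, on each of which the kernel is $\asymp 2^{-k}$, and the doubling of $\Psi$ together with $\omega\in\DD$ yields a geometric series that converges only for $\gamma$ sufficiently large. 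Your approach collapses precisely because it tries to avoid this case distinction.
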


\begin{proof}
For the lower bound, by Proposition~\ref{properties} we have
				\begin{align*}
        \lVert f_a \rVert^p_{A_{\omega,\Psi}^{p}}
				&\asymp\int_{\D}\left(\frac{1-|a|}{|1-\overline{a}z|}\right)^\gamma
				\frac{\omega(z)}{\omega(S(a))\Psi(\frac{1}{\omega(S(a))})}\Psi\left(\left(\frac{1-|a|}{|1-\overline{a}z|}\right)^\gamma \frac{1}{\omega(S(a))\Psi(\frac{1}{\omega(S(a))})}\right)\,dA(z)\\
        &\gtrsim\int_{S(a)} \frac{\omega(z)}{\omega(S(a))\Psi(\frac{1}{\omega(S(a))})}\Psi\left(\frac{1}{\omega(S(a))\Psi(\frac{1}{\omega(S(a))})}\right)\,dA(z)\\
        &=\frac{\Psi\left(\frac{1}{\omega(S(a))\Psi(\frac{1}{\omega(S(a))})}\right)}{\Psi(\frac{1}{\omega(S(a))})} \asymp 1,\quad a\in\D.
				\end{align*}

For the upper bound, assume first that $\Psi$ is essentially increasing. By Proposition~\ref{properties}(ii), (iii) and Lemma~\ref{Dhat}(iii), we have
				\begin{align*}
        \lVert f_a \rVert^p_{A_{\omega,\Psi}^{p}}
				&\asymp\int_{\D}\left(\frac{1-|a|}{|1-\overline{a}z|}\right)^\gamma
				\frac{\omega(z)}{\omega(S(a))\Psi(\frac{1}{\omega(S(a))})}\Psi\left(\left(\frac{1-|a|}{|1-\overline{a}z|}\right)^\gamma \frac{1}{\omega(S(a))\Psi(\frac{1}{\omega(S(a))})}\right)\,dA(z)\\
        &\lesssim \int_{\D}\left(\frac{1-|a|}{|1-\overline{a}z|}\right)^\gamma
				\frac{\omega(z)}{\omega(S(a))\Psi(\frac{1}{\omega(S(a))})}\Psi\left(\frac{1}{\omega(S(a))\Psi(\frac{1}{\omega(S(a))})}\right)\,dA(z)\\
        & \asymp \frac{\Psi\left(\frac{1}{\omega(S(a))\Psi(\frac{1}{\omega(S(a))})}\right)}{\Psi(\frac{1}{\omega(S(a))})}
        \asymp 1,\quad a\in\D.
				\end{align*}
Next, assume that $\Psi$ is essentially decreasing. Let $N=N(a)\in\N$ be the smallest integer such that $2^N(1-|a|)\ge1$. Define
				\begin{align*}
        S_0(a)=S(a), \quad
    S_k(a)&=S((1-2^k(1-|a|))e^{i \text{arg}a}),\quad k=1,\ldots, N-1,
    \end{align*}
and set $S_k(a)=\D$ if $k\ge N$. Then
    \begin{equation}\label{xx}
    |1-\overline{a}z| \asymp 2^k(1-|a|),\quad z \in S_k(a)\setminus S_{k-1}(a),\quad  k=1,\ldots, N.
    \end{equation}
Since $\Psi\in\LL$,  by Proposition~\ref{properties}(i) there exists a constant $C=C(\Psi)>0$ such that
    \begin{equation}\label{xxx}
   \frac{1}{C^k}\Psi(x) \le\Psi\left(\frac{x}{2^k}\right)\le C^k\Psi(x),\quad 0\leq x<\infty.
    \end{equation}
Therefore, by employing \eqref{xx}, \eqref{xxx}, Proposition \ref{properties}(ii)-(iii) and the hypothesis $\om\in\DD$, we deduce the existence of $C_1=C_1(\Psi)>0$ such that
    \begin{align*}
        \lVert f_a \rVert^p_{A_{\omega,\Psi}^{p}}&\asymp\int_{\D}\left(\frac{1-|a|}{|1-\overline{a}z|}\right)^\gamma \frac{\omega(z)}{\omega(S(a))\Psi(\frac{1}{\omega(S(a))})}\Psi\left(\left(\frac{1-|a|}{|1-\overline{a}z|}\right)^\gamma \frac{1}{\omega(S(a))\Psi(\frac{1}{\omega(S(a))})}\right)\,dA(z)\\
        & \le\sum_{k=1}^{N}\int_{S_k(a) \setminus S_{k-1}(a)}\left(\frac{1-|a|}{|1-\overline{a}z|}\right)^\gamma \frac{\omega(z)}{\omega(S(a))\Psi(\frac{1}{\omega(S(a))})}\\
				&\quad\cdot\Psi\left(\left(\frac{1-|a|}{|1-\overline{a}z|}\right)^\gamma \frac{1}{\omega(S(a))\Psi(\frac{1}{\omega(S(a))})}\right)\,dA(z) \\
        & \qquad + \int_{S(a)}\left(\frac{1-|a|}{|1-\overline{a}z|}\right)^\gamma \frac{\omega(z)}{\omega(S(a))\Psi(\frac{1}{\omega(S(a))})}\\
				&\quad\cdot\Psi\left(\left(\frac{1-|a|}{|1-\overline{a}z|}\right)^\gamma \frac{1}{\omega(S(a))\Psi(\frac{1}{\omega(S(a))})}\right)\,dA(z)\\
        & \asymp \sum_{k=1}^{N}\int_{S_k(a) \setminus S_{k-1}(a)}\frac{1}{2^{k\gamma}} \frac{\omega(z)}{\omega(S(a))\Psi(\frac{1}{\omega(S(a))})}\Psi\left(\frac{1}{2^{k\gamma}\omega(S(a))\Psi(\frac{1}{\omega(S(a))})}\right)\,dA(z)+1\\
                & \lesssim \sum_{k=0}^{N} \left(\frac{C_1}{2^{\gamma}}\right)^k \frac{\omega(S_k(a))}{\omega(S(a))},  \qquad a\in\D. \end{align*}
Proof is completed by taking $\gamma$ sufficiently large since $\omega(S_k(a))\le C^k\omega(S(a))$ by Lemma \ref{Dhat}(ii).
\end{proof}

We will also use the following characterization of the class $\Dd$, the proof of which can be found in \cite{PRS1}.

\begin{letterlemma}\label{Dcheck}
Let $\omega$ be a radial weight. Then $\om\in\Dd$ if and only if there exist $C=C(\omega)>0$ and $\beta=\beta(\omega)>0$ such that
	$$
	\widehat{\omega}(t) \leq C\left(\frac{1-t}{1-r}\right)^\beta \widehat{\omega}(r), \quad 0 \leq r \leq t<1.
	$$
\end{letterlemma}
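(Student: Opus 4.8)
The plan is to establish the two implications separately, using that $\widehat{\om}$ is genuinely non-increasing on $[0,1)$ (being the tail integral of a non-negative function) and strictly positive there by hypothesis. \emph{Sufficiency:} suppose there are constants $C,\beta>0$ with $\widehat{\om}(t)\le C\bigl(\tfrac{1-t}{1-r}\bigr)^{\beta}\widehat{\om}(r)$ for all $0\le r\le t<1$. Fixing $r$ and applying this with $t=1-\tfrac{1-r}{K}$, so that $\tfrac{1-t}{1-r}=\tfrac1K$, gives $\widehat{\om}\bigl(1-\tfrac{1-r}{K}\bigr)\le CK^{-\beta}\widehat{\om}(r)$. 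Choosing $K>1$ so large that $CK^{-\beta}<1$ turns this into $\widehat{\om}(r)\ge(CK^{-\beta})^{-1}\widehat{\om}\bigl(1-\tfrac{1-r}{K}\bigr)$ with constant strictly larger than $1$, i.e.\ $\om\in\Dd$.

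\emph{Necessity:} suppose $\om\in\Dd$ with constants $K=K(\om)>1$ and $C=C(\om)>1$, so that $\widehat{\om}(s)\ge C\widehat{\om}\bigl(1-\tfrac{1-s}{K}\bigr)$ for all $s\in[0,1)$. Fix $r\in[0,1)$ and put $r_n=1-\tfrac{1-r}{K^n}$; this is an increasing sequence in $[r,1)$ with $r_0=r$ and $1-\tfrac{1-r_n}{K}=r_{n+1}$, so a one-line induction gives $\widehat{\om}(r_n)\le C^{-n}\widehat{\om}(r)$ for all $n\ge0$. Given $0\le r\le t<1$, let $n\ge0$ be the largest integer with $K^n\le\tfrac{1-r}{1-t}$ (well defined since $\tfrac{1-r}{1-t}\ge1$); this inequality rearranges exactly to $r_n\le t$, whence monotonicity gives $\widehat{\om}(t)\le\widehat{\om}(r_n)\le C^{-n}\widehat{\om}(r)$. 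Maximality of $n$ gives $K^{n+1}>\tfrac{1-r}{1-t}$, hence $n+1>\log_K\tfrac{1-r}{1-t}\ge0$, and since $C>1$,
\[
C^{-n}=C\cdot C^{-(n+1)}<C\,\bigl(\tfrac{1-r}{1-t}\bigr)^{-\log_K C}=C\,\Bigl(\tfrac{1-t}{1-r}\Bigr)^{\beta},\qquad \beta:=\frac{\log C}{\log K}>0,
\]
which yields the desired estimate $\widehat{\om}(t)\le C\bigl(\tfrac{1-t}{1-r}\bigr)^{\beta}\widehat{\om}(r)$.

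The whole argument is bookkeeping. The only points needing a little care are checking that the integer $n$ in the necessity direction is well defined and non-negative, and observing that the exponent $\beta=\log C/\log K$ is \emph{strictly} positive — which is precisely why the definition of $\Dd$ insists that the reverse-doubling constant be $>1$ rather than merely $\ge1$. I do not foresee a genuine obstacle.
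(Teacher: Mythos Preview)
Your proof is correct. The paper does not actually prove this lemma in-text but simply cites \cite{PRS1}; the argument you give is the standard dyadic iteration one would expect for such a reverse-doubling characterization, and every step checks out.
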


In the proof of the main result we need to understand the behavior of the auxiliary weight $W=\Psi\left(\frac{1}{1-|\cdot|}\right)\om$. To do this, we will use the next result.

\begin{lemma}\label{LemmaPsiOm}
Let $\omega \in \mathcal{D}$ and $\Psi\in\LL$. Then
\begin{equation}\label{WeightEstimate}
\int_{r}^{1}\Psi\left(\frac{1}{1-s}\right)\frac{\widehat{\omega}(s)}{1-s}\,ds
\lesssim \int_{r}^{1}\Psi\left(\frac{1}{1-s}\right)\omega(s)\,ds,\quad 0\leq r<1.
\end{equation}
\end{lemma}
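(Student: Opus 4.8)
The plan is to discretize both integrals along the dyadic sequence $r_n=1-2^{-n}(1-r)$, $n=0,1,2,\dots$, which starts at $r_0=r$ and increases to $1$, with $1-r_n=2^{-n}(1-r)$. Writing $\psi_n=\Psi\bigl(\tfrac{1}{1-r_n}\bigr)$, on the annulus $[r_n,r_{n+1}]$ one has $1-s\asymp 2^{-n}(1-r)$, hence $\Psi\bigl(\tfrac1{1-s}\bigr)\asymp\psi_n$ there by Proposition~\ref{properties}(i), with a comparison constant independent of $n$ and $r$. Since moreover $\widehat\omega$ is decreasing and the length of $[r_n,r_{n+1}]$ is $\asymp 2^{-n}(1-r)\asymp 1-s$ on that annulus, estimating the integrand on each annulus by its supremum times the length gives
$$
\int_r^1\Psi\Bigl(\tfrac1{1-s}\Bigr)\frac{\widehat\omega(s)}{1-s}\,ds
=\sum_{n=0}^{\infty}\int_{r_n}^{r_{n+1}}\Psi\Bigl(\tfrac1{1-s}\Bigr)\frac{\widehat\omega(s)}{1-s}\,ds
\lesssim\sum_{n=0}^{\infty}\psi_n\,\widehat\omega(r_n),
$$
while on the other side the trivial identity $\int_{r_n}^{r_{n+1}}\omega(s)\,ds=\widehat\omega(r_n)-\widehat\omega(r_{n+1})$ yields
$$
\int_r^1\Psi\Bigl(\tfrac1{1-s}\Bigr)\omega(s)\,ds
\gtrsim\sum_{n=0}^{\infty}\psi_n\bigl(\widehat\omega(r_n)-\widehat\omega(r_{n+1})\bigr).
$$
Thus \eqref{WeightEstimate} reduces to the discrete inequality $\sum_n\psi_n\widehat\omega(r_n)\lesssim\sum_n\psi_n\bigl(\widehat\omega(r_n)-\widehat\omega(r_{n+1})\bigr)$.

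This is where the hypothesis on $\omega$ is used, and only the part $\omega\in\Dd$ is needed. By Lemma~\ref{Dcheck}, applied with $r_n\le r_{n+j}$ so that $\tfrac{1-r_{n+j}}{1-r_n}=2^{-j}$, one has $\widehat\omega(r_{n+j})\le C2^{-\beta j}\widehat\omega(r_n)$, so there is a fixed $j_0=j_0(\omega)\in\N$ with $\widehat\omega(r_{n+j_0})\le\tfrac12\widehat\omega(r_n)$ for all $n$, and therefore
$$
\widehat\omega(r_n)\le 2\bigl(\widehat\omega(r_n)-\widehat\omega(r_{n+j_0})\bigr)=2\sum_{k=n}^{n+j_0-1}\bigl(\widehat\omega(r_k)-\widehat\omega(r_{k+1})\bigr).
$$
Because $0\le k-n\le j_0-1$ forces $\psi_n\asymp\psi_k$ (again Proposition~\ref{properties}(i), the constant now depending on $j_0$ as well), multiplying by $\psi_n$, summing over $n$, and interchanging the order of summation — each index $k$ being reached by at most $j_0$ values of $n$ — gives $\sum_n\psi_n\widehat\omega(r_n)\lesssim\sum_k\psi_k\bigl(\widehat\omega(r_k)-\widehat\omega(r_{k+1})\bigr)$, which is exactly the required estimate. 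Chaining the three displays completes the proof, and the argument remains valid verbatim when the right-hand side of \eqref{WeightEstimate} equals $+\infty$.

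The calculation is routine once the discretization is in place; the one substantive point — and the reason that simply applying Fubini's theorem to the left-hand side fails — is that $\int_r^t\Psi\bigl(\tfrac1{1-s}\bigr)\tfrac{ds}{1-s}$ carries an extra logarithmic factor compared with $\Psi\bigl(\tfrac1{1-t}\bigr)$ (it is of order $\Psi\bigl(\tfrac1{1-t}\bigr)\log\tfrac{1-r}{1-t}$ already for $\Psi(x)=(\log(e+x))^\alpha$, $\alpha>0$), so this overshoot must be absorbed into the geometric decay of $\widehat\omega$ guaranteed by $\Dd$. The role of the doubling property $\Psi(x)\asymp\Psi(x^2)$ is precisely to ensure that $\Psi$ varies by no more than a bounded factor across any bounded number of consecutive dyadic annuli, which is what makes that absorption legitimate.
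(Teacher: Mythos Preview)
Your proof is correct. The discretization along the dyadic sequence $r_n=1-2^{-n}(1-r)$ works exactly as you say: the comparison $\Psi\bigl(\tfrac{1}{1-s}\bigr)\asymp\psi_n$ on $[r_n,r_{n+1}]$ holds with a uniform constant because the argument changes by at most a factor $2$ there, and the passage from $\widehat\omega(r_n)$ to the telescoping differences via the $\Dd$ condition is clean.

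Your approach differs from the paper's. The paper splits into the two cases $\Psi$ essentially increasing and $\Psi$ essentially decreasing, and in each case shows directly that both sides of \eqref{WeightEstimate} are comparable to $\Psi\bigl(\tfrac{1}{1-r}\bigr)\widehat\omega(r)$, using Lemma~\ref{Dcheck} for the left-hand side and an integration by parts (together with Lemma~\ref{ThetaIncreasing}) for the right-hand side. Your dyadic argument is more unified---no case distinction on the monotonicity of $\Psi$ and no integration by parts---and makes the dependence on $\Dd$ alone just as transparent. What the paper's route buys is the auxiliary identification $\int_r^1\Psi\bigl(\tfrac{1}{1-s}\bigr)\omega(s)\,ds\asymp\Psi\bigl(\tfrac{1}{1-r}\bigr)\widehat\omega(r)$, which it records immediately after the lemma and uses later (e.g.\ in Proposition~\ref{setequiv}); your argument does not yield this for free, though it follows easily by the same discretization.
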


\begin{proof}
    Assume first that $\Psi$ is essentially decreasing. On one hand, since $\om\in\DDD\subset\Dd$, by Lemma \ref{Dcheck}, there exists a $\beta=\beta(\om)>0$ such that
\begin{align*}
    \int_{r}^{1}\Psi\left(\frac{1}{1-s}\right)\frac{\widehat{\omega}(s)}{1-s}\,ds &\lesssim \Psi\left(\frac{1}{1-r}\right)\int_r^1\frac{\whw(s)}{(1-s)^{1-\beta+\beta}}\,ds\\
    &\lesssim \Psi\left(\frac{1}{1-r}\right)\frac{\whw(r)}{(1-r)^{\beta}}\int_r^1\frac{1}{(1-s)^{1-\beta}}\,ds\lesssim \Psi\left(\frac{1}{1-r}\right)\whw(r).
\end{align*}
On the other hand, it follows from Lemma \ref{ThetaIncreasing} that $s\mapsto\frac{1}{(1-s)^{\beta}}\Psi\left(\frac{1}{1-s}\right)$ is essentially increasing. Then, by integrating by parts we obtain
		\begin{align*}
    \int_{r}^{1}\Psi\left(\frac{1}{1-s}\right)\omega(s)\,ds
		&=\int_{r}^{1}\Psi\left(\frac{1}{1-s}\right)\omega(s)\frac{(1-s)^{\beta}}{(1-s)^{\beta}}\,ds\\
		&\gtrsim \Psi\left(\frac{1}{1-r}\right) \frac{1}{(1-r)^{\beta}}\int_{r}^{1}(1-s)^{\beta}\omega(s)\,ds\\
    &=\Psi\left(\frac{1}{1-r}\right) \frac{1}{(1-r)^{\beta}} \left((1-r)^{\beta}\widehat{\omega}(r)
		+\int_{r}^{1} \beta (1-s)^{\beta-1}\whw(s)\,ds \right) \\
    &\ge\Psi\left(\frac{1}{1-r}\right)\widehat{\omega}(r),\quad 0\le r<1.
		\end{align*}
This finishes the proof of the case when $\Psi$ is essentially decreasing.

Assume next that $\Psi$ is essentially increasing. Then, by Lemma~\ref{Dcheck}, there exists an $\alpha=\alpha(\om)>0$ such that
\begin{align*}
    \int_{r}^{1}\Psi\left(\frac{1}{1-s}\right)\frac{\widehat{\omega}(s)}{1-s}\,ds &\lesssim
    \frac{\widehat{\omega}(r)}{(1-r)^{\alpha}}\int_{r}^{1}\Psi\left(\frac{1}{1-s}\right)\frac{1}{(1-s)^{1-\alpha}}\,ds,\quad 0\le r<1.
\end{align*}
Now, for $\beta<\alpha$, the function $s\mapsto(1-s)^{\beta}\Psi\left(\frac{1}{1-s}\right)$ is essentially decreasing by Lemma~\ref{ThetaIncreasing}. Thus
\begin{align*}
    \frac{\widehat{\omega}(r)}{(1-r)^{\alpha}}\int_{r}^{1}\Psi\left(\frac{1}{1-s}\right)\frac{(1-s)^{\beta}}{(1-s)^{1-\alpha+\beta}}\,ds \lesssim
    \widehat{\omega}(r)\Psi\left(\frac{1}{1-r}\right) \lesssim \int_{r}^{1}\Psi\left(\frac{1}{1-s}\right)\omega(s)\,ds
\end{align*}
for all $0\le r<1$. The claim follows.
\end{proof}
{It should be noted that it can easily be proven that $\Psi\left(\frac{1}{1-r}\right)\widehat{\omega}(r) \asymp \int_{r}^{1}\Psi\left(\frac{1}{1-s}\right)\omega(s)\,ds$ using similar methods as in the above proof. We will use this fact in the proof of Proposition \ref{setequiv}.}

\medskip

\Prf \emph{Theorem~\ref{mainresult}(i)}. Assume first that $\mu$ is a $(q,\Phi)$-Carleson measure for $A_{\omega,\Psi}^{p}$. Then Proposition~\ref{properties}(iii) and Lemma~\ref{fa in Ap} yield
				\begin{equation}\label{eqbdd}
				\begin{split}
        &\quad \frac{\mu(S(a))\Phi(\frac{1}{\omega(S(a))})}{\left(\omega(S(a))\Psi(\frac{1}{\omega(S(a))})\right)^{\frac{q}{p}}}\\
				&\asymp \int_{S(a)} \frac{1}{{\left(\omega(S(a))\Psi(\frac{1}{\omega(S(a))})\right)^{\frac{q}{p}}}}\Phi\left(\frac{1}{\omega(S(a))\Psi(\frac{1}{\omega(S(a))})}\right)\,d\mu(z)\\
        &\asymp\int_{S(a)}\left(\frac{1-|a|}{|1-\overline{a}z|}\right)^{\frac{q\gamma}{p}}\frac{1}{{\left(\omega(S(a))\Psi(\frac{1}{\omega(S(a))})\right)^{\frac{q}{p}}}}\Phi\left(\left(\frac{1-|a|}{|1-\overline{a}z|}\right)^{\gamma} \frac{1}{\omega(S(a))\Psi(\frac{1}{\omega(S(a))})}\right)\,d\mu(z)\\
        &\leq \int_{\D}\left(\frac{1-|a|}{|1-\overline{a}z|}\right)^\frac{q\gamma}{p} \frac{1}{{\left(\omega(S(a))\Psi(\frac{1}{\omega(S(a))})\right)^{\frac{q}{p}}}}\Phi\left(\left(\frac{1-|a|}{|1-\overline{a}z|}\right)^\gamma \frac{1}{\omega(S(a))\Psi(\frac{1}{\omega(S(a))})}\right)\,d\mu(z)\\
        &\asymp \|f_a\|^q_{L^q_{\mu, \Phi}}\lesssim1,\quad a\in\D.
        \end{split}
				\end{equation}

Conversely, assume \eqref{bdd}. Recall that the pseudohyperbolic disc is the set $\Delta(a,r)=\{z\in \D:|(a-z)/(1-\overline{a}z)|<r\}$. It is well known that $\Delta(a,r)$ is an Euclidean disc $D(A,R)$ centered at $A=(1-r^2)a/(1-r^2|a|^2)$ and of radius $R=(1-|a|^2)r/(1-r^2|a|^2)$. Since $\om\in\DDD$ by the hypothesis, we have $\om(S(a))\asymp\om(\Delta(a,r))$ for all $a\in\D$, provided $r=r(\om)\in(0,1)$ is sufficiently large. This is an easy consequence of the identity $\Delta(a,r)=D(A,R)$ and Lemmas~\ref{Dhat} and~\ref{Dcheck}. Further, we observe that once $r$ is fixed, there exists $K=K(r)>1$ such that $\Delta(a',r)\subset S(a)$ for $a'=\left(1-\frac{1-|a|}{K}\right)\frac{a}{|a|}$ for all $a\in\D$. Then it follows from  \eqref{bdd} and Proposition \ref{properties}(i) that
				\begin{equation}\label{bdd1}
				\sup_{a \in \D}\frac{\mu(\Delta(a,r))\Phi\left(\frac{1}{\omega(S(a))}\right)}{\left(\omega(\Delta(a,r))\Psi\left(\frac{1}{\omega(S(a))}\right)\right)^{\frac{q}{p}}} < \infty.
				\end{equation}
This is the condition that we will use to show that $\mu$ is a $(q,\Phi)$-Carleson measure for $A_{\omega,\Psi}^{p}$. Recall that for $r=r(\om)\in(0,1)$ appearing in \eqref{bdd1} we have $\om(S(a))\asymp\om(\Delta(a,r))$ for all $a\in\D$. This will be repeatedly used during the rest of the proof.

Let $f \in A_{\omega,\Psi}^{p}$ with $\|f\|_{A_{\omega,\Psi}^{p}}=M>0$.  The proof will be divided into three separate cases. To shorten the notation, we will write $\om_{[x]}(z)=\om(z)(1-|z|)^x$ for all $z\in\D$ and $x\in\mathbb{R}$.

\medskip
{\textbf{Case I: $\Phi$ and $\Psi$ are essentially increasing.}}
By the subharmonicity of $|f|^p$, Minkowski's inequality, Lemma~\ref{Growthf}, Proposition~\ref{properties}(ii) and \eqref{bdd1}, we deduce
        \begin{equation}\label{eq1}
				\begin{split}
        \lVert f \rVert_{L_{\mu, \Phi}^{q}}^{q}&=\int_{\D}|f(z)|^q \Phi(|f(z)|) \,d\mu(z)\\
        &\lesssim \int_{\D} \left(\int_{\Delta(z,r)}\frac{|f(\zeta)|^p}{(1-|\zeta|)^2} \,dA(\zeta) \left(\Phi(|f(z)|)\right)^{\frac{p}{q}} \right)^{\frac{q}{p}}\,d\mu(z)\\
        &\leq \left(\int_{\D} \frac{|f(\zeta)|^p}{(1-|\zeta|)^2}\left(\int_{\Delta(\zeta,r)} \Phi(|f(z)|)\,d\mu(z)\right)^{\frac{p}{q}}\,dA(\zeta)\right)^{\frac{q}{p}}\\
        &\lesssim\left(\int_{\D} \frac{|f(\zeta)|^p}{(1-|\zeta|)^2}\left(\int_{\Delta(\zeta,r)} \Phi\left(\frac{M^p+1}{\om(S(z))}\right)\,d\mu(z)\right)^{\frac{p}{q}}\,dA(\zeta)\right)^{\frac{q}{p}}\\
        &\asymp\left(\int_{\D} \frac{|f(\zeta)|^p}{(1-|\zeta|)^2} \left( \mu(\Delta(\zeta,r))\Phi\left(\frac{1}{\omega(S(\zeta))}\right)\right)^{\frac{p}{q}}\,dA(\zeta)\right)^{\frac{q}{p}}\\
        &\lesssim \left(\int_{\D}|f(\zeta)|^p\Psi\left(\frac{1}{\omega(S(\zeta))}\right)\frac{\omega(\Delta(\zeta,r))}{(1-|\zeta|)^2}  \,dA(\zeta)\right)^{\frac{q}{p}}\\
        &\asymp\left(\int_{\D}|f(\zeta)|^p\Psi\left(\frac{1}{\omega(S(\zeta))}\right)\frac{\omega(S(\z))}{(1-|\zeta|)^2}  \,dA(\zeta)\right)^{\frac{q}{p}}.
        \end{split}
        \end{equation}
Since the quantity $\Psi\left(\frac{1}{\omega(S(\z))}\right) \frac{\omega\left(S(\z)\right)}{(1-|\z|)^2}$ depends upon $|\z|$ only, an integration by parts together with Lemma~\ref{LemmaPsiOm} yields
	\begin{equation}\label{eq2}
	\int_{\D}|f(\zeta)|^p\Psi\left(\frac{1}{\omega(S(\zeta))}\right)\frac{\omega\left(S(\zeta)\right)}{(1-|\zeta|)^2}\,dA(\zeta)
	\lesssim \int_{\D}|f(\zeta)|^p\Psi\left(\frac{1}{\omega(S(\zeta))}\right)\omega(\zeta)\,dA(\zeta),
	\end{equation}
see \cite[Lemma~8]{PRV} for a proof in a general case. Thus
	\begin{equation}\label{eq1'}
  \begin{split}
  \lVert f \rVert_{L_{\mu, \Phi}^{q}}^{p}\lesssim \int_{\D}|f(\zeta)|^p\Psi\left(\frac{1}{\omega(S(\zeta))}\right)\omega(\zeta)\,dA(\zeta).
  \end{split}
  \end{equation}
Define $E_{\veps}=E_{\veps}(f)=\{z\in \D:|f(z)|<\frac{1}{(1-|z|)^{\veps}}\}$, where $\veps=\veps(p,\omega)>0$ will be fixed later. By Lemma~\ref{Dhat}(ii), there exists an {$\alpha=\alpha(\om)>0$} such that $(1-|a|)^{1+\alpha}\lesssim\omega(S(a)) \lesssim(1-|a|)$ for all $a\in\D$. Bearing in mind that $\Psi$ is essentially increasing by the assumption, and using \eqref{eq1'}, Proposition~\ref{properties}(ii) and Lemma~\ref{GrowthPsi}, we deduce
		\begin{align*}
    \lVert f \rVert_{L_{\mu, \Phi}^{q}}^{p}
		&\lesssim\int_{\D}|f(\zeta)|^p\Psi\left(\frac{1}{\omega(S(\zeta))}\right)\omega(\zeta)\,dA(\zeta)
		\asymp\int_{\D}|f(\zeta)|^p\Psi\left(\frac{1}{1-|\zeta|}\right)\omega(\zeta)\,dA(\zeta)\\
    &=\int_{E_{\veps}}|f(\zeta)|^p\Psi\left(\frac{1}{1-|\zeta|}\right)\omega(\zeta)\,dA(\zeta)
		+\int_{\D\setminus E_{\veps}}|f(\zeta)|^p\Psi\left(\frac{1}{1-|\zeta|}\right)\omega(\zeta)\,dA(\zeta)\\
    &\le\int_{E_{\veps}}\frac{1}{(1-|\zeta|)^{p\veps}}\Psi\left(\frac{1}{1-|\zeta|}\right)\omega(\zeta)\,dA(\zeta)
		+\int_{\D\setminus E_{\veps}}|f(\zeta)|^p\Psi\left(|f(\zeta)|\right)\omega(\zeta)\,dA(\zeta)\\
    &\lesssim\int_{\D}\Psi\left(\frac{1}{1-|\zeta|}\right)\omega_{[-p\veps]}(\zeta)\,dA(\zeta)
		+\lVert f\rVert^p_{A_{\omega,\Psi}^{p}}\lesssim\int_{\D}\omega_{[-(p\veps+\eta)]}(\zeta)\,dA(\zeta)
		+\lVert f\rVert^p_{A_{\omega,\Psi}^{p}}
		\end{align*}
for each fixed $\eta>0$. It is well known that for each $\om\in\DDD$ there exists a $\delta_0=\delta_0(\om)>0$ such that $\om_{[-\d]}\in\DDD$ for all $0<\delta<\delta_0$. This is an easy consequence of an integration by parts and Lemmas~\ref{Dhat} and \ref{Dcheck}, see \cite[Lemma~2]{PR2023} for a more general result. By choosing $\veps>0$ and $\eta>0$ small enough such that $p\veps+\eta<\delta_0$, we have $\omega_{[-(p\veps+\eta)]}\in\DDD$, and thus $\omega_{[-(p\veps+\eta)]}\in L^1$. It follows that $\|f\|^p_{L^q_{\mu,\Phi}}\lesssim1$, where the suppressed constant depends on $M,p,q,\om,\Phi$ and $\Psi$.

\medskip

{\textbf{Case II: $\Phi$ is essentially increasing, and $\Psi$ is essentially decreasing.}} Let $E_{\veps,p}=E_{\veps,p}(f)=\{z\in\D:|f(z)|<\frac{1}{(1-|z|)^{\veps/p}}\}$ with $\veps=\veps(\omega)<\delta_0$, where $\delta_0$ is that of Case I. Then \eqref{eq1'}, Proposition~\ref{properties}(ii) and Lemma~\ref{Growthf} yield
		\begin{align*}
		\lVert f \rVert_{L_{\mu, \Phi}^{q}}^p
		&\lesssim\int_{E_{\veps,p}}|f(\zeta)|^p\Psi\left(\frac{1}{1-|\zeta|}\right)\omega(\zeta)\,dA(\zeta)
		+\int_{\D\setminus E_{\veps,p}}|f(\zeta)|^p\Psi\left(\frac{1}{1-|\zeta|}\right)\omega(\zeta)\,dA(\zeta)\\
&\asymp\int_{E_{\veps,p}}|f(\zeta)|^p\Psi\left(\frac{1}{1-|\zeta|}\right)\omega(\zeta)\,dA(\zeta)
		+\int_{\D\setminus E_{\veps,p}}|f(\zeta)|^p\Psi\left(\frac{M^p+1}{\om(S(\z))}\right)\omega(\zeta)\,dA(\zeta)\\ &\lesssim\int_{E_{\veps,p}}\omega_{[-\veps]}(\zeta)\,dA(\zeta)
		+\int_{\D\setminus E_{\veps,p}}|f(\zeta)|^p\Psi\left(|f(\zeta)|\right)\omega(\zeta)\,dA(\zeta)
		\lesssim 1+\|f\|_{A^p_{\om,\Psi}}^p,
		\end{align*}
 where the suppressed constant depends on $M,p,q,\om,\Phi$ and $\Psi$ and thus this case is proved.

\medskip

{\textbf{Case III: $\Phi$ is essentially decreasing.}} Let $E_{\veps}=E_{\veps}(f)=\{z\in \D: |f(z)|<\frac{1}{(1-|z|)^{\veps}}\}$, where $\veps=\veps(q,\omega)>0$ will be fixed later, and write
		\begin{align*}
		\lVert f \rVert_{L_{\mu, \Phi}^{q}}^q
		&=\int_{E_\veps}|f(z)|^q \Phi(|f(z)|) \,d\mu(z)
    +\int_{\D \setminus E_{\veps}}|f(z)|^q \Phi(|f(z)|)\,d\mu(z).
		\end{align*}
For the first integral, we observe that Fubini's theorem and \eqref{bdd1} yield
		\begin{align*}
    \int_{E_\veps}|f(z)|^q \Phi(|f(z)|) \,d\mu(z)
    &\lesssim\int_{E_\veps}\frac{d\mu(z)}{(1-|z|)^{q\veps}}
		\lesssim\int_{\D}\left(\int_{\Delta(z,r)}\frac{dA(\zeta)}{(1-|\zeta|)^{q\veps+2}}\right)d\mu(z)\\
    &=\int_{\D}\frac{\mu(\Delta(\zeta,r))}{(1-|\zeta|)^{q\veps+2}}\,dA(\zeta)\\
		&\lesssim\int_{\D}\frac{\omega(\Delta(\zeta,r))^\frac{q}{p}\Psi\left(\frac{1}{\omega(S(\zeta))}\right)^\frac{q}{p}}
		{\Phi\left(\frac{1}{\omega(S(\zeta))}\right)}\frac{dA(\zeta)}{(1-|\zeta|)^{q\veps+2}},
		\end{align*}
where
	$$
	\frac{\Psi\left(\frac{1}{\omega(S(\zeta))}\right)^\frac{q}{p}}{\Phi\left(\frac{1}{\omega(S(\zeta))}\right)}\lesssim\frac{1}{(1-|\zeta|)^\eta},\quad \zeta\in\D,
	$$
for each fixed $\eta>0$ by Lemma~\ref{GrowthPsi}. By using $\om(\Delta(\zeta,r))\asymp\om(S(\zeta))$ for all $a\in\D$, and Lemma~\ref{Dcheck}, we deduce
	\begin{equation*}
	\begin{split}
	\int_{E_\veps}|f(z)|^q \Phi(|f(z)|) \,d\mu(z)
	&\lesssim\int_\D\frac{\widehat{\om}(\zeta)^\frac{q}{p}(1-|\zeta|)^\frac{q}{p}}{(1-|\zeta|)^{\eta+q\veps+2}}\,dA(\zeta)
	\lesssim\int_0^1\frac{\widehat{\om}(r)}{(1-r)^{\eta+q\veps+1}}\,dr\\
	&\lesssim\int_0^1\frac{dr}{(1-r)^{\eta+q\veps+1-\beta}}
	\end{split}
	\end{equation*}
for some $\beta=\beta(\om)>0$. By choosing $\eta>0$ and $\veps>0$ small enough, the last integral converges. To estimate the second integral, we use the subharmonicity of $|f|^p$, Minkowski's inequality and \eqref{eq2} to obtain
				\begin{align*}
        \int_{\D \setminus E_{\veps}}|f(z)|^q \Phi(|f(z)|) \,d\mu(z)
        &\lesssim\int_{\D \setminus E_{\veps}}|f(z)|^q \Phi\left(\frac{1}{\om(S(z))}\right) \,d\mu(z)\\
        &\lesssim \int_{\D \setminus E_{\veps}} \left(\int_{\Delta(z,r)}\frac{|f(\zeta)|^p}{(1-|\zeta|)^2}\,dA(\zeta) \Phi\left(\frac{1}{\om(S(z))}\right)^{\frac{p}{q}} \right)^{\frac{q}{p}}\,d\mu(z)\\
        &\le\left(\int_{\D} \frac{|f(\zeta)|^p}{(1-|\zeta|)^2}\left(\int_{\Delta(\zeta,r)} \Phi\left(\frac{1}{\om(S(z))}\right)\,d\mu(z)\right)^{\frac{p}{q}}\,dA(\zeta)\right)^{\frac{q}{p}}\\
        &\lesssim\left(\int_{\D}\frac{|f(\zeta)|^p}{(1-|\zeta|)^2}\left(\mu(\Delta(\zeta,r))\Phi\left(\frac{1}{\omega(S(\zeta))}\right)\right)^{\frac{p}{q}}\,dA(\zeta)\right)^{\frac{q}{p}}\\
        &\lesssim\left(\int_{\D}|f(\zeta)|^p\Psi\left(\frac{1}{\omega(S(\zeta))}\right)\frac{\omega(\Delta(\zeta,r))}{(1-|\zeta|)^2}\,dA(\zeta)\right)^{\frac{q}{p}}\\
				&\lesssim\left(\int_{\D}|f(\zeta)|^p\Psi\left(\frac{1}{\omega(S(\zeta))}\right)\omega(\zeta)\,dA(\zeta)\right)^{\frac{q}{p}}.
				\end{align*}
By combining the estimates above we have
		$$
		\lVert f \rVert_{L_{\mu, \Phi}^{q}}^{p}
		\lesssim\left(\int_{\D}|f(\zeta)|^p\Psi\left(\frac{1}{\omega(S(\zeta))}\right)\omega(\zeta)\,dA(\zeta)\right)^{\frac{q}{p}}+1.
		$$
If $\Psi$ is essentially increasing (resp. decreasing), we may finish the proof by arguing as in Case~I (resp. Case~II). The proof of Theorem~\ref{mainresult}(i) is now complete.\hfill$\Box$

\medskip

\Prf \emph{Theorem~\ref{mainresult}(ii)}. Assume first that $I: A^p_{\om,\Psi}\to L^q_{\mu,\Phi}$ is compact. The functions $f_a$ defined in Lemma~\ref{fa in Ap} satisfy $\|f_a\|_{A^p_{\om,\Psi}}\asymp1$ for all $a\in\D$, and certainly $f_a\to 0$ on compact subsets of $\D$, as $|a|\to 1^-$. Therefore $\|f_a\|_{L^q_{\mu,\Phi}}\to0$ as $|a|\to 1^-$, and hence \eqref{eqbdd} yields
	$$
	0=\lim_{|a|\to 1^-}\|f_a\|^q_{L^q_{\mu,\Phi}}
	\gtrsim \lim_{|a|\to 1^-}\frac{\mu(S(a))\Phi\left(\frac{1}{\omega(S(a))}\right)}{\left(\omega(S(a))\Psi\left(\frac{1}{\omega(S(a))}\right)\right)^{\frac{q}{p}}}.
	$$
Proposition~\ref{cptseq} now completes the proof of the necessity.

For the sufficiency, assume \eqref{cpt}. Then a reasoning similar to that employed at the beginning of the proof of Theorem~\ref{mainresult}(i) shows that
				\begin{equation}\label{cpt1}
				\lim_{|a|\to1^-}\frac{\mu(\Delta(a,r))\Phi\left(\frac{1}{\omega(S(a))}\right)}{\left(\omega(\Delta(a,r))\Psi\left(\frac{1}{\omega(S(a))}\right)\right)^{\frac{q}{p}}}=0.
				\end{equation}
Let $\{f_n\}$ be a norm bounded sequence in $A^p_{\om,\Psi}$ such that $f_n\to0$ uniformly on compact subsets of $\D$, as $n\to\infty$. To prove the assertion, by Proposition~\ref{cptseq}, it suffices to show that
	\begin{equation}\label{eqcpt}
	\lim_{n\to\infty}\|f_n\|_{L^q_{\mu,\Phi}}=0.
	\end{equation}
Let $R\in(0,1)$ to be fixed later and $D(0,R)=\{z\in\D: |z|\leq R\}$. Then
\begin{align*}
\|f_n\|_{L^q_{\mu,\Phi}}^q&=\int_\D |f_n(z)|^q\Phi(|f_n(z)|)\,d\mu(z)\\&=\left(\int_{D(0,R)+\D\setminus D(0,R)}\right)|f_n(z)|^q\Phi(|f_n(z)|)\,d\mu(z)=I_1+I_2.
\end{align*}
Since $\mu$ is a finite measure, by Lemmas~\ref{GrowthPsi} and~\ref{Growthf}, and the dominated convergence theorem, for arbitrary $\epsilon>0$ and $R\in(0,1)$, there exists an $N=N(\epsilon)\in\N$ such that $I_1\lesssim\epsilon$ for all $n\ge N$. To estimate $I_2$, we first observe that, by \eqref{cpt1}, there exists an $R=R(\epsilon)\in(0,1)$ such that
	\begin{equation}\label{eq3}
	\frac{\mu(\Delta(a,r))\Phi\left(\frac{1}{\omega(S(a))}\right)}
	{\left(\omega(\Delta(a,r))\Psi\left(\frac{1}{\omega(S(a))}\right)\right)^{\frac{q}{p}}}<\epsilon,\quad R<|a|<1.
	\end{equation}
Suppose first that $\Phi$ is essentially increasing. Imitating the reasoning in \eqref{eq1} and employing \eqref{eq3}, we deduce
	\begin{equation*}
  \begin{split}
  I_2&=\int_{\D\setminus D(0,R)}|f_n(z)|^q\Phi(|f_n(z)|)\,d\mu(z)\\
  &\lesssim\left(\int_{\D}\frac{|f_n(\z)|^p}{(1-|\z|)^2}
	\left(\int_{\Delta(\z,r)\setminus D(0,R)} \Phi(|f_n(z)|)\,d\mu(z)\right)^{\frac{p}{q}}\,dA(\z)\right)^{\frac{q}{p}}\\
  &\lesssim\epsilon\left(\int_{\D}|f_n(\z)|^p\Psi\left(\frac{1}{\omega(S(\z))}\right)\frac{\omega(\Delta(\z,r))}{(1-|\z|)^2}\,dA(\z)\right)^{\frac{q}{p}}
	\lesssim\epsilon.
  \end{split}
\end{equation*}
Here the last inequality follows from the same steps appearing in Cases~I and~II of the proof of Part~(i), and the hypothesis that $\{f_n\}$ is bounded in $A^p_{\om,\Psi}$. Therefore we eventually deduce $\eqref{eqcpt}$ when $\Phi$ is essentially increasing.

Suppose now $\Phi$ is essentially decreasing. Let $E_\veps=E_\veps(f_n)$ be defined as before. Similar arguments as were used to prove Case~III of Part~(i) yield
\begin{equation*}
    \begin{split}
    I_2&=\int_{\D\setminus D(0,R)}|f_n(z)|^q\Phi(|f_n(z)|)\,d\mu(z)\\
    &=\left(\int_{E_{\veps}\setminus D(0,R)}+\int_{(\D\setminus D(0,R))\setminus E_\veps}\right)|f_n(z)|^q\Phi(|f_n(z)|)\,d\mu(z)
    =I_{2,1}+I_{2,2}.
    \end{split}
\end{equation*}
To estimate $I_{2,1}$, we use arguments appearing in Case~III and \eqref{eq3} to find an $R=R(\epsilon)\in(0,1)$ such that
\begin{equation}\label{1}
    \begin{split}
        I_{2,1}&\lesssim \left(\int_{\D} \frac{1}{(1-|\z|)^{p\veps +2}} (\mu(\Delta(\z,r)\setminus D(0,R)))^{\frac{p}{q}}\,dA(\z)\right)^\frac{q}{p}\\
    &\lesssim \epsilon\left(\int_{\D}\frac{dA(\z)}{(1-|\z|)^{p\veps+\eta+1-\beta}}\right)^\frac{q}{p}\lesssim\epsilon,
    \end{split}
\end{equation}
where {similar steps were taken as in proof of Case III}. Likewise, for $I_{2,2}$, we get
		\begin{equation}\label{2}
    \begin{split}
    I_{2,2}
		&\lesssim\int_{(\D\setminus D(0,R))\setminus E_{\veps}}\left(\int_{\Delta(z,r)}\frac{|f_n(\z)|^p}{(1-|\z|)^2} \,dA(\z)
		\Phi(|f_n(z)|)^{\frac{p}{q}} \right)^{\frac{q}{p}}\,d\mu(z)\\
    &\lesssim\left(\int_{\D}\frac{|f_n(\z)|^p}{(1-|\z|)^2}\left(\int_{\Delta(\z,r)\setminus D(0,R)}
		\Phi\left(\frac{1}{\om(S(z)}\right)\,d\mu(z)\right)^{\frac{p}{q}}\,dA(\z)\right)^{\frac{q}{p}}
		\lesssim \epsilon.
    \end{split}
\end{equation}
Hence, by combining the estimate for $I_1$, \eqref{1} and \eqref{2}, we establish \eqref{eqcpt}. \hfill$\Box$\\

\begin{proposition}\label{setequiv}
Let $0<p<\infty, \Psi \in \LL$ and $\om \in \DDD$. Then there exists a weight $W=W(\om,\Psi) \in \DDD$ such that $A_{\om,\Psi}^p=A_W^p$ when considered as sets. Also the identity mappings $I: A_{\om,\Psi}^p \to A_W^p$ and $I: A_W^p \to A_{\om,\Psi}^p$ are bounded. Moreover, $\lim_{n\to\infty}\|f_n\|_{A^p_W}= 0$ if and only if $\lim_{n\to\infty} \|f_n\|_{A^p_{\om,\Psi}}=0$.
\end{proposition}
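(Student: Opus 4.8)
The natural candidate is $W=\Psi\!\left(\frac1{1-|\cdot|}\right)\om$, which is radial, and by the remark following Lemma~\ref{LemmaPsiOm} satisfies $\widehat W(r)\asymp\Psi\!\left(\frac1{1-r}\right)\widehat{\omega}(r)$ for all $0\le r<1$; in particular $\int_\D W\,dA\asymp\Psi(1)\widehat{\omega}(0)<\infty$, so $W$ is a genuine weight and $\widehat W>0$. To see $W\in\DD$, combine $\widehat W(r)\asymp\Psi\!\left(\frac1{1-r}\right)\widehat{\omega}(r)$ with Proposition~\ref{properties}(i) (the arguments $\frac1{1-r}$ and $\frac1{1-(1+r)/2}$ are comparable) and with $\widehat{\omega}(r)\le C\widehat{\omega}(\frac{1+r}2)$. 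For $W\in\Dd$ I would use Lemma~\ref{Dcheck} for $\om$: fixing the corresponding $\beta=\beta(\om)>0$ and any $\beta_1\in(0,\beta)$, Lemma~\ref{ThetaIncreasing} gives that $x\mapsto(1-x)^{\beta_1}\Psi\!\left(\frac1{1-x}\right)$ is essentially decreasing, whence $\Psi\!\left(\frac1{1-t}\right)\lesssim\left(\frac{1-r}{1-t}\right)^{\beta_1}\Psi\!\left(\frac1{1-r}\right)$ for $r\le t$; feeding this together with Lemma~\ref{Dcheck} into $\widehat W(t)\asymp\Psi\!\left(\frac1{1-t}\right)\widehat{\omega}(t)$ produces $\widehat W(t)\lesssim\left(\frac{1-t}{1-r}\right)^{\beta-\beta_1}\widehat W(r)$, so $W\in\Dd$ by Lemma~\ref{Dcheck} again.

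\textbf{Set equality and boundedness of the identities.}
Here the key elementary identity is
\[
W(S(a))\asymp\Psi\!\left(\tfrac1{1-|a|}\right)\om(S(a))\asymp\Psi\!\left(\tfrac1{\om(S(a))}\right)\om(S(a)),\qquad a\in\D,
\]
obtained by writing $W(S(a))=\frac{1-|a|}{\pi}\int_{|a|}^1\Psi\!\left(\frac1{1-\rho}\right)\om(\rho)\rho\,d\rho$, applying the remark after Lemma~\ref{LemmaPsiOm}, and using $(1-|a|)^{1+\alpha}\lesssim\om(S(a))\lesssim(1-|a|)$ together with Proposition~\ref{properties}(ii) to replace $\frac1{1-|a|}$ by $\frac1{\om(S(a))}$ inside $\Psi$. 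With this, apply Theorem~\ref{mainresult}(i) twice with $q=p$: taking source $A^p_{\om,\Psi}$ and target $L^p_{W\,dA,1}=A^p_W$, the quantity in \eqref{bdd} equals $\frac{W(S(a))}{\om(S(a))\Psi(1/\om(S(a)))}\asymp1$, so $I\colon A^p_{\om,\Psi}\to A^p_W$ is bounded; taking source $A^p_{W,1}=A^p_W$ (legitimate since $W\in\DDD$) and target $L^p_{\om\,dA,\Psi}$, the quantity is $\frac{\om(S(a))\Psi(1/W(S(a)))}{W(S(a))}\asymp1$, where Proposition~\ref{properties}(iii) is used to see $\Psi(1/W(S(a)))\asymp\Psi(1/\om(S(a)))$, so $I\colon A^p_W\to A^p_{\om,\Psi}$ is bounded. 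Boundedness of the identity in either direction forces the two spaces to have the same elements.

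\textbf{The convergence equivalence.}
I would argue symmetrically; suppose $\|f_n\|_{A^p_{\om,\Psi}}\to0$ (the case $\|f_n\|_{A^p_W}\to0$ being analogous). Then $\{f_n\}$ is eventually norm-bounded, hence by Lemma~\ref{Growthf} a normal family; by Montel's theorem and Fatou's lemma every subsequential limit $g$ satisfies $\int_\D|g|^p\Psi(|g|)\om\,dA=0$, where the essential monotonicity of $\Psi$ and Proposition~\ref{properties}(i) substitute for continuity of $\Psi$ in the lower bound $\liminf_n|f_n|^p\Psi(|f_n|)\gtrsim|g|^p\Psi(|g|)$ at points where $g\neq0$; since $\widehat{\omega}>0$ this forces $g\equiv0$, so $f_n\to0$ uniformly on compacta. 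Now run the relevant case (Case~I, II or III) of the proof of Theorem~\ref{mainresult}(i) with source $A^p_{\om,\Psi}$ and target $L^p_{W\,dA,1}=A^p_W$; it produces an estimate of the shape
\begin{equation*}
\|f_n\|_{A^p_W}^p\lesssim C\!\left(\|f_n\|_{A^p_{\om,\Psi}}\right)\|f_n\|_{A^p_{\om,\Psi}}^p+\int_{E_{\veps}(f_n)}h\,dA,
\end{equation*}
where $C(\cdot)$ is bounded near $0$, $E_{\veps}(f_n)=\{z:|f_n(z)|<(1-|z|)^{-\veps}\}$, and $h$ is a fixed integrable density (of the form $\om(z)(1-|z|)^{-\sigma}$ up to constants, coming exactly from the "$E_\veps$-integral'' in that case; possibly $h\equiv0$). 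The first term tends to $0$. For the second, split $E_{\veps}(f_n)=(E_{\veps}(f_n)\cap D(0,R))\cup(E_{\veps}(f_n)\setminus D(0,R))$: on the inner piece $\int|f_n|^p\Psi(\tfrac1{1-|z|})\om\,dA\le C_R\int_{D(0,R)}|f_n|^p\,dA\to0$ as $n\to\infty$ by uniform convergence on compacta (for each fixed $R$), while on the outer piece the integral is at most $\int_{\D\setminus D(0,R)}h\,dA\to0$ as $R\to1^-$. Choosing $R$ first and then $n$ large gives $\|f_n\|_{A^p_W}\to0$.

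\textbf{Where the difficulty sits.}
The weight computations and the two applications of Theorem~\ref{mainresult} are routine. The point that genuinely needs care is the convergence equivalence: "boundedness of the identity'', which is all Theorem~\ref{mainresult} supplies, does \emph{not} give a two-sided comparison of the quasinorms (indeed they are not comparable), so one cannot pass from $\|f_n\|\to0$ on one side to the other by homogeneity. The device that makes it work is to unwind the proof of Theorem~\ref{mainresult}(i) and observe that the only obstruction to a clean inequality is the additive term $\int_{E_\veps(f_n)}h\,dA$ with $h$ integrable, and to neutralise this term uniformly in $n$ by exhausting $\D$ by compacta — using uniform convergence of $f_n$ on the compact part and smallness of the boundary tail $\int_{\D\setminus D(0,R)}h\,dA$ on the complement.
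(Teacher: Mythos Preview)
Your argument for $W\in\DDD$ and for the two applications of Theorem~\ref{mainresult}(i) is exactly the paper's. For the convergence equivalence the paper proceeds differently: it splits into the cases $\Psi$ essentially increasing and $\Psi$ essentially decreasing, and in each case one of the two directions is a \emph{direct} inequality $\|f_n\|_{\text{target}}^p\lesssim\|f_n\|_{\text{source}}^p$ (obtained from the pointwise growth bound $\Psi(|f_n|)\lesssim\Psi\!\left(\frac1{1-|z|}\right)$ resp.\ $\Psi\!\left(\frac1{1-|z|}\right)\lesssim\Psi(|f_n|)$, using Lemma~\ref{Growthf}), while the remaining direction is handled by the $E^n_\veps$-split plus dominated convergence. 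Your route --- Montel to get $f_n\to0$ locally uniformly, then re-running the relevant case of Theorem~\ref{mainresult}(i) and killing the $E_\veps$-contribution by a compact/boundary split --- is a unified repackaging of the same mechanism; the compact/boundary split is precisely DCT written out by hand.

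One exposition point to fix: you display the residual term as $\int_{E_\veps(f_n)}h\,dA$ with $h$ \emph{fixed}, but that quantity does \emph{not} tend to $0$ (indeed $E_\veps(f_n)\supset D(0,R)$ for large $n$, so $\int_{E_\veps(f_n)}h\,dA\to\int_\D h\,dA>0$). What you actually need --- and what you use two lines later --- is the sharper form $\int_{E_\veps(f_n)}|f_n|^p\Psi\!\left(\frac1{1-|z|}\right)\om\,dA$, which is dominated by $h$ on $E_\veps(f_n)$ and does go to $0$ by your inner/outer split. State the residual term in that form from the start and the argument is clean.
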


\begin{proof}
Define $W(z)=\Psi\left(\frac{1}{1-|z|}\right)\om(z)$ for all $z \in \D$. We first show that $W\in\DDD$. By the hypothesis $\om\in\DD$, Lemma~\ref{properties}(i) and the afternote of Lemma~\ref{LemmaPsiOm}, we have
    $$
    \widehat{W}(r)
		\asymp\Psi\left(\frac{1}{1-r}\right)\widehat{\om}(r)
		\lesssim\Psi\left(\frac{1}{1-\frac{1+r}{2}}\right)\widehat{\om}\left(\frac{1+r}{2}\right)
		\asymp\widehat{W}\left(\frac{1+r}{2}\right),\quad 0\leq r<1.
    $$
Therefore $W\in\DD$ by the definition. Let $\veps=\veps(\om)\in(0,\beta)$, where $\beta=\beta(\om)$ is that of Lemma~\ref{Dcheck}. Then Lemmas~\ref{Dcheck} and~\ref{ThetaIncreasing} imply
    \begin{equation*}
    \begin{split}
        \widehat{W}(t) &\asymp \Psi\left(\frac{1}{1-t}\right)\widehat{\om}(t)
				\lesssim \Psi\left(\frac{1}{1-t}\right)\left(\frac{1-t}{1-r}\right)^{\beta}\whw(r)\\
        &=\frac{\Psi\left(\frac{1}{1-t}\right)(1-t)^{\veps}}{\Psi\left(\frac{1}{1-r}\right)(1-r)^{\veps}}\left(\frac{1-t}{1-r}\right)^{\beta-\veps}\Psi\left(\frac{1}{1-r}\right)\whw(r) \\
        & \lesssim \left(\frac{1-t}{1-r}\right)^{\beta-\veps}\widehat{W}(r), \quad 0 \leq r < t < 1,
    \end{split}
    \end{equation*}
which shows $W \in \Dd$ by Lemma \ref{Dcheck}. Thus $W\in\DDD$.

By Proposition~\ref{properties}(i) and the afternote of Lemma~\ref{LemmaPsiOm} we have
    \begin{align*}
        \frac{W(S(a))}{\Psi\left(\frac{1}{1-|a|}\right) \om(S(a))}
				\asymp 1,\quad a\in\D.
    \end{align*}
Hence Theorem~\ref{mainresult}(i) implies that the identity mappings between $A_{\om,\Psi}^p$ and $A_W^p$ are bounded, and thus $A_{\om,\Psi}^p=A_W^p$ as sets.

For the last statement, assume first that $\Psi$ is essentially increasing. Let first $\{f_n\}$ be a bounded sequence in $A^p_W$. By the first part of the proof we know that $\{f_n\}$ is also a bounded sequence in $A^p_{\om,\Psi}$. By Lemma~\ref{Growthf} we have
	\begin{equation*}
	\|f_n\|^p_{A^p_{\om,\Psi}}
  \lesssim \int_{\D}|f_n(z)|\Psi\left(\frac{1}{1-|z|}\right)\om(z)\,dA(z)=\|f_n\|^p_{A^p_W},
	\end{equation*}
which shows that if $\|f_n\|^p_{A^p_W} \to 0$, then $\|f_n\|^p_{A^p_{\om,\Psi}} \to 0$.

Let now $\{f_n\}$ be a bounded sequence in $A^p_{\om,\Psi}$ for which $\|f_n\|^p_{A^p_{\om,\Psi}} \to 0$. Let $E^n_{\veps}=E^n_{\veps}(f)=\{z \in \D: |f_n(z)|<\frac{1}{(1-|z|)^{\veps/p}}\}$, where $\veps>0$ is fixed such that
		\begin{equation*}
    \int_{\D}\frac{1}{(1-|z|)^{\veps}}\Psi\left(\frac{1}{1-|z|}\right)\om(z)\,dA(z)<\infty.
		\end{equation*}
Then
		\begin{equation*}
    \begin{split}
    \|f_n\|^p_{A^p_W}
		&=\int_{\D}|f_n(z)|\Psi\left(\frac{1}{1-|z|}\right)\om(z)\,dA(z) \\
    &=\int_{E^n_{\veps}}|f_n(z)|\Psi\left(\frac{1}{1-|z|}\right)\om(z)\,dA(z)
		+\int_{\D \setminus E^n_{\veps}}|f_n(z)|\Psi\left(\frac{1}{1-|z|}\right)\om(z)\,dA(z)\\
    &\lesssim \int_{E^n_{\veps}}|f_n(z)|\Psi\left(\frac{1}{1-|z|}\right)\om(z)\,dA(z)+\|f_n\|^p_{A^p_{\om,\Psi}}.
    \end{split}
		\end{equation*}
By the proof of Lemma~\ref{Growthf}, we know that there is no `$+1$' in the numerator on the right hand side of the statement of the said lemma when $\Psi$ is essential increasing. Hence $|f_n|\to0$ pointwise in $\D$ when $\|f_n\|^p_{A^p_{\om,\Psi}}\to0$. Thus the dominated convergence theorem yields
\begin{equation}
    \int_{E^n_{\veps}}|f_n(z)|\Psi\left(\frac{1}{1-|z|}\right)\om(z)\,dA(z) \to 0,
\end{equation}
when $n \to \infty$. Therefore $\|f_n\|^p_{A^p_W} \to 0$ if $\|f_n\|^p_{A^p_{\om,\Psi}} \to 0$.

Assume next that $\Psi$ is essentially decreasing. Then
	\begin{equation*}
  \|f_n\|^p_{A^p_W}
	\lesssim\int_{\D}|f_n(z)|\Psi\left(|f_n(z)|\right)\om(z)\,dA(z)
	=\|f_n\|^p_{A^p_{\om,\Psi}},
	\end{equation*}
which shows that if $\|f_n\|^p_{A^p_{\om,\Psi}} \to 0$, then $\|f_n\|^p_{A^p_W} \to 0$.
Moreover, for the set $E^n_{\veps}=E^n_{\veps}(f)$, defined above, we have
	\begin{equation*}
  \begin{split}
  \|f_n\|^p_{A^p_{\om,\Psi}}
	&=\int_{\D}|f_n(z)|^p\Psi\left(|f_n(z)|\right)\om(z)\,dA(z)\\
  &\lesssim \int_{E^n_{\veps}}|f_n(z)|^p\Psi\left(|f_n(z)|\right)\om(z)\,dA(z)
	+\int_{\D \setminus E^n_{\veps}}|f_n(z)|\Psi\left(\frac{1}{1-|z|}\right)\om(z)\,dA(z)\\
  &\le\int_{E^n_{\veps}}|f_n(z)|^p\Psi\left(|f_n(z)|\right)\om(z)\,dA(z)+\|f_n\|^p_{A^p_W}.
  \end{split}
	\end{equation*}
Since $|f_n|\to 0$ pointwise in $\D$ when $\|f_n\|^p_{A^p_W}\to0$ by Lemma~\ref{Growthf} and its proof, we deduce
	\begin{equation*}
  \int_{E^n_{\veps}}|f_n(z)|^p\Psi\left(|f_n(z)|\right)\om(z)\,dA(z) \to 0,
	\end{equation*}
as $n \to \infty$. Therefore we have $\|f_n\|^p_{A^p_W} \to 0$ when $\|f_n\|^p_{A^p_{\om,\Psi}} \to 0$, which concludes the proof.
\end{proof}

Recall that in Proposition~\ref{Growthf} we provided a rough growth estimate for functions in $A^p_{\om,\Psi}$ when $\om\in\DD$. If $\om\in\DDD$, then we can give a more precise growth estimate, which is needed to prove Theorem~\ref{Tgbounded}. A sharp bound for the constant $C$ involved in the statement can certainly be tracked down in terms of $\|f\|_{A^p_{\om,\Psi}}$ but it is not relevant for our purposes, and therefore we leave it for interested readers.

\begin{lemma}\label{Growthf1}
Let $0<p<\infty$, $\omega \in \DDD$ and $\Psi\in\LL$. Then, for each $f\in A^p_{\om,\Psi}$, there exists a constant $C=C(p,\om,\Psi, \|f\|_{A^p_{\om,\Psi}})$ such that
\begin{equation}\label{growth1}
    |f(z)|^{p}\le \frac{C}{\om(S(z))
    \Psi\left(\frac{1}{\om(S(z))}\right)
    }, \quad f\in A_{\omega,\Psi}^{p},\quad z\in\D.
    \end{equation}
\end{lemma}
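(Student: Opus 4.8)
The plan is to reduce the claim to the known sharp growth estimate for an \emph{ordinary} weighted Bergman space $A^p_W$, where $W=W(\om,\Psi)=\Psi\!\left(\frac{1}{1-|\cdot|}\right)\om$ is the auxiliary weight from Proposition~\ref{setequiv}. Fix $f\in A^p_{\om,\Psi}$. By Proposition~\ref{setequiv} we have $W\in\DDD\subset\DD$, the sets $A^p_{\om,\Psi}$ and $A^p_W$ coincide, and the identity map $I\colon A^p_{\om,\Psi}\to A^p_W$ is bounded. Since ``bounded'' means that bounded sets are mapped to bounded sets, this yields a finite bound $\|f\|_{A^p_W}\le C_0$ with $C_0=C_0(p,\om,\Psi,\|f\|_{A^p_{\om,\Psi}})$; note that no linear dependence is claimed here, which is exactly why the constant in \eqref{growth1} is allowed to depend on $\|f\|_{A^p_{\om,\Psi}}$ in an unspecified way.

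Next I would invoke Lemma~\ref{Growthf} with the constant (hence essentially \emph{increasing}) inducing function $\Psi\equiv 1$, applied to $f$ regarded as an element of $A^p_W$ with $W\in\DD$. As recorded in the last line of the proof of that lemma, when the inducing function is essentially increasing one may take $\veps=0$ and drop the additive constant $1$; this gives
$|f(z)|^{p}\,W(S(z))\lesssim \|f\|^p_{A^p_W}\le C_0^{\,p}$ for all $z\in\D$. Hence it only remains to identify $W(S(z))$ with $\om(S(z))\Psi\!\left(\frac{1}{\om(S(z))}\right)$ up to constants.

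For this last step I would argue exactly as in the proof of Proposition~\ref{setequiv}: since $W\in\DD$ we have $W(S(z))\asymp(1-|z|)\widehat W(z)=(1-|z|)\int_{|z|}^1\Psi\!\left(\frac{1}{1-s}\right)\om(s)\,ds$, and by the afternote of Lemma~\ref{LemmaPsiOm} the latter is $\asymp(1-|z|)\Psi\!\left(\frac{1}{1-|z|}\right)\widehat\om(z)\asymp\Psi\!\left(\frac{1}{1-|z|}\right)\om(S(z))$. Thus it suffices to show $\Psi\!\left(\frac{1}{1-|z|}\right)\asymp\Psi\!\left(\frac{1}{\om(S(z))}\right)$. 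Applying Lemma~\ref{Dhat}(ii) with $r=0$ gives a $\beta=\beta(\om)>0$ with $(1-|z|)^{1+\beta}\lesssim\om(S(z))\lesssim 1-|z|$, so that $\frac{1}{1-|z|}\lesssim\frac{1}{\om(S(z))}\lesssim\left(\frac{1}{1-|z|}\right)^{1+\beta}$. Combining the essential monotonicity of $\Psi$ with Proposition~\ref{properties}(i) (to absorb multiplicative constants inside $\Psi$) and Proposition~\ref{properties}(ii) (to absorb the exponent $1+\beta$) yields the two-sided comparison $\Psi\!\left(\frac{1}{1-|z|}\right)\asymp\Psi\!\left(\frac{1}{\om(S(z))}\right)$ in both the essentially increasing and essentially decreasing cases. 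Chaining the estimates $|f(z)|^p\lesssim C_0^p/W(S(z))$, $W(S(z))\asymp\Psi\!\left(\frac{1}{1-|z|}\right)\om(S(z))$ and $\Psi\!\left(\frac{1}{1-|z|}\right)\asymp\Psi\!\left(\frac{1}{\om(S(z))}\right)$ gives \eqref{growth1}.

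The only genuinely delicate point is the weight-mass comparison $\Psi\!\left(\frac{1}{1-|z|}\right)\asymp\Psi\!\left(\frac{1}{\om(S(z))}\right)$: essential monotonicity alone provides just one of the two inequalities, and it is the doubling property $\Psi(x)\asymp\Psi(x^2)$ (through Proposition~\ref{properties}(ii), used to handle the power $1+\beta$) that supplies the reverse bound. Everything else is a routine assembly of Proposition~\ref{setequiv}, Lemma~\ref{Growthf}, Lemma~\ref{LemmaPsiOm} and Lemma~\ref{Dhat}.
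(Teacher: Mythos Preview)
Your proof is correct, and there is no circularity: in the paper Proposition~\ref{setequiv} is established (via Theorem~\ref{mainresult}(i)) \emph{before} Lemma~\ref{Growthf1}, and it does not invoke Lemma~\ref{Growthf1} anywhere. The chain Proposition~\ref{setequiv} $\Rightarrow$ $f\in A^p_W$ with $W\in\DDD$ $\Rightarrow$ classical Bergman growth estimate $|f(z)|^pW(S(z))\lesssim\|f\|^p_{A^p_W}$ $\Rightarrow$ $W(S(z))\asymp\om(S(z))\Psi\!\left(\frac{1}{\om(S(z))}\right)$ is sound, and your justification of the last comparison via Lemma~\ref{Dhat}(ii) together with Proposition~\ref{properties}(i)--(ii) is exactly right.

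The paper, however, argues \emph{directly} and does not pass through Proposition~\ref{setequiv}. It starts from the subharmonicity of $|f|^p$ on $\Delta(z,r)$, inserts and removes the factors $\Psi\!\left(\frac{1}{1-|\zeta|}\right)$ and $\widehat\om(\zeta)$, and then uses integration by parts with Lemma~\ref{LemmaPsiOm} to reach
\[
|f(z)|^p\lesssim\frac{1}{\om(S(z))\Psi\!\left(\frac{1}{1-|z|}\right)}\int_\D|f(\zeta)|^p\Psi\!\left(\frac{1}{1-|\zeta|}\right)\om(\zeta)\,dA(\zeta).
\]
The remaining integral is then bounded by a constant depending on $\|f\|_{A^p_{\om,\Psi}}$ via a case split: for essentially increasing $\Psi$ one cuts along the level set $E_{\veps,p}=\{|f|<(1-|\cdot|)^{-\veps/p}\}$ and uses $\Psi\!\left(\frac{1}{1-|\zeta|}\right)\lesssim\Psi(|f(\zeta)|)$ off $E_{\veps,p}$; for essentially decreasing $\Psi$ one instead feeds in the rough estimate of Lemma~\ref{Growthf} to replace $\Psi\!\left(\frac{1}{1-|\zeta|}\right)$ by (a rescaled) $\Psi(|f(\zeta)|)$. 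Your route is shorter and more conceptual---it recognises that all of this case analysis has already been packaged once and for all inside Proposition~\ref{setequiv} and the proof of Theorem~\ref{mainresult}(i)---while the paper's route is more self-contained and makes explicit where the dependence of $C$ on $\|f\|_{A^p_{\om,\Psi}}$ enters.
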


\begin{proof}
By the subharmonicity of $|f|^p$, and integration by parts with Lemma \ref{LemmaPsiOm}, we deduce
\begin{equation}\label{e1}
    \begin{split}
        |f(z)|^p &\lesssim \frac{1}{(1-|z|)^2}\int_{\Delta(z,r)}|f(\z)|^p\frac{\Psi\left(\frac{1}{1-|\z|}\right)}{\Psi\left(\frac{1}{1-|\z|}\right)}\frac{\whw(\z)}{\whw(\z)}\,dA(\z) \\&\asymp \frac{1}{\om(S(z))\Psi\left(\frac{1}{1-|z|}\right)}\int_{\Delta(z,r)}|f(\z)|^p\Psi\left(\frac{1}{1-|\z|}\right)\frac{\whw(\z)}{1-|\z|}\,dA(\z)\\
        &\lesssim \frac{1}{\om(S(z))\Psi\left(\frac{1}{1-|z|}\right)}\int_{\D}|f(\z)|^p\Psi\left(\frac{1}{1-|\z|}\right)\om(\z)\,dA(\z)\quad z\in\D.
    \end{split}
\end{equation}
Since $\om\in\DDD$ by the hypothesis, we are able to find a sufficiently small $\veps=\veps(\om)$, such that
\begin{equation}\label{e2}
 \int_{\D}\frac{1}{(1-|z|)^{\veps}}\Psi\left(\frac{1}{1-|\z|}\right)\om(z)\,dA(z)<\infty.
\end{equation}
When $\Psi$ is essentially increasing, let $E_{\veps,p}=\{z\in\D:|f(z)|<\frac{1}{(1-|z|)^{\veps/p}}\}$. Then \eqref{e1} and \eqref{e2} yield
\begin{equation}\label{zz}
    \begin{split}
        |f(z)|^p &\lesssim \frac{1}{\om(S(z))\Psi\left(\frac{1}{1-|z|}\right)}\left(\int_{E_{\veps,p}}+\int_{\D \setminus E_{\veps,p}}\right)|f(\z)|^p\Psi\left(\frac{1}{1-|\z|}\right)\om(\z)\,dA(\z)\\
        &\lesssim \frac{1}{\om(S(z))\Psi\left(\frac{1}{1-|z|}\right)}\left(1+\int_{\D \setminus E_{\veps,p}}|f(\z)|^p\Psi\left(|f(\z)|\right)\om(\z)\,dA(\z)\right)\\
          &\leq \frac{\lVert f \rVert^p_{A_{\omega,\Psi}^{p}}+1}{\om(S(z))\Psi\left(\frac{1}{1-|z|}\right)},\quad z\in\D.
    \end{split}
\end{equation}

When $\Psi$ is essentially decreasing, then \eqref{growth}, \eqref{e1} and Proposition~\ref{properties}(i),(ii) yield
    \begin{equation}\label{zzz}
    \begin{split}
           |f(z)|^p&\lesssim\frac{1}{\om(S(z))\Psi\left(\frac{1}{1-|z|}\right)}\int_{\D}|f(\z)|^p\Psi\left(\frac{1}{1-|\z|}\right)\om(\z)\,dA(\z)\\
           &\lesssim\frac{1}{\om(S(z))\Psi\left(\frac{1}{1-|z|}\right)}\int_{\D}|f(\z)|^p\Psi\left(\frac{|f(\z)|^{p-\delta}}{\|f\|^p_{A^p_{\om,\Psi}}+1}\right)\om(\z)\,dA(\z)\\
           &\lesssim \frac{\|f\|^p_{A^p_{\om,\Psi}}}{\om(S(z))\Psi\left(\frac{1}{1-|z|}\right)},\quad z\in\D,
       \end{split}
    \end{equation}
    where the suppressed constant depends on $p,\om$, $\Psi$ and $\|f\|_{A^p_{\om,\Psi}}$.
The proof will be completed by combining \eqref{zz} and \eqref{zzz}.
\end{proof}

We are now ready to prove Theorem~\ref{Tgbounded}. Before giving the proof, we should notice that the proof of Proposition~\ref{cptseq} with minor modification implies that $T_g: A^p_{\om,\Psi}\to A^q_{\nu,\Phi}$ is compact if and
only if $\lim_{n\to\infty} \|T_g(f_n)\|_{L^q_{\mu,\Phi}}=0$  for any bounded sequence $\{f_n\}$ in $A^p_{\om,\Psi}$ that converges to 0
uniformly on compact subsets on $\D$.

\medskip

\Prf \emph{Theorem~\ref{Tgbounded}(i)}.
Assume first that \eqref{Tgcondition} holds. By Lemma~\ref{ThetaIncreasing}, Lemma~\ref{Growthf1}, \eqref{Tgcondition} and Lemma~\ref{GrowthPsi}, we derive that there exists a $\beta=\beta(\nu,q)$ such that
		\begin{equation}\label{Tgfgrowth}
		\begin{split}
    |T_g(f)(z)|
		&=\left|\int_{0}^{z}f(\z)g'(\z)\,d\z\right|
		\le\int_{0}^{z}|f(\z)g'(\z)|\,|d\z|\\
    &\lesssim \int_{0}^{|z|}\left(\frac{\lVert f \rVert^p_{A^p_{\om,\Psi}}+1}{\om(S(\z))\Psi\left(\frac{1}{1-|\z|}\right)}\right)^\frac{1}{p}
		\frac{\left(\Psi\left(\frac{1}{\om(S(\z))}\right)\om(S(\z))\right)^\frac{1}{p}}
		{(1-|\z|)\left(\Phi\left(\frac{1}{\om(S(\z))}\right)\nu(S(\z))\right)^\frac{1}{q}}\,|d\z|\\
    &\lesssim \frac{(\lVert f \rVert^p_{A^p_{\om,\Psi}}+1)^{\frac1p}}{(1-|z|)^{\beta}},\quad f\in A^p_{\om,\Psi},\quad z\in\D,
		\end{split}
		\end{equation}
where the suppressed constant depends on $p,q,\om,\Psi,\Phi$ and $\|f\|_{A^p_{\om,\Psi}}$. This estimate enables us to prove that for any $\nu\in\DDD$ and $\Phi\in\LL$, we have
\begin{equation}\label{qw}
\|T_g(f)\|^q_{A^q_{\nu,\Phi}}\lesssim  1 + \int_{\D}|T_g(f)(z)|^q\Phi\left(\frac{1}{1-|z|}\right)\nu(z)\,dA(z),\quad g\in \H(\D).
\end{equation}
Indeed, if $\Phi$ is essentially increasing, then \eqref{Tgfgrowth} and Proposition~\ref{properties}(ii) yields \eqref{qw} directly. If $\Phi$ is essentially decreasing, define $E_{\veps}=\{z\in \D: |T_g(f)(z)|<\frac{1}{(1-|z|)^{\veps/q}}\}$, where $\veps=\veps(q,\nu)>0$ will be fixed in such a way that
$\int_{\D}\nu_{[-\veps]}(z)\,dA(z)<\infty.$
This estimate together with Proposition~\ref{properties}(ii) yields
\begin{equation*}
\begin{split}
        \lVert T_g(f) \rVert_{A^q_{\nu,\Phi}}^q
        &=\int_{\D}|T_g(f)(z)|^q\Phi\left(|T_g(f)(z)|\right)\nu(z)\,dA(z)\\
        &=\int_{E_{\veps}}|T_g(f)(z)|^q\Phi\left(|T_g(f)(z)|\right)\nu(z)\,dA(z)\\
        &\quad +\int_{\D \setminus E_{\veps}}|T_g(f)(z)|^q\Phi\left(|T_g(f)(z)|\right)\nu(z)\,dA(z)\\
        &\lesssim 1 + \int_{\D}|T_g(f)(z)|^q\Phi\left(\frac{1}{1-|z|}\right)\nu(z)\,dA(z),
\end{split}
\end{equation*}
which proves \eqref{qw}. An integration by parts together with Lemma~\ref{LemmaPsiOm} yields
\begin{equation}\label{qwe}
   \int_{\D}|T_g(f)(z)|^q\Phi\left(\frac{1}{1-|z|}\right)\nu(z)\,dA(z) \asymp  \int_{\D}|T_g(f)(z)|^q\Phi\left(\frac{1}{1-|z|}\right)\frac{\widehat{\nu}(z)}{1-|z|}\,dA(z).
\end{equation}
Since $\nu \in \DDD$, standard arguments imply $\frac{\widehat{\nu}}{1-|\cdot|} \in \DDD$, and hence $\Phi\left(\frac{1}{1-|\cdot|}\right)\frac{\widehat{\nu}}{1-|\cdot|} \in \DDD$ by Proposition \ref{setequiv}.  Then, applying the general Littlewood-Paley theorem (see \cite[Theorem 5]{PelRat2020}), \eqref{qwe} and \eqref{Tgcondition}, we get
\begin{equation}\label{1111}
\begin{split}
   & \quad\int_{\D}|T_g(f)(z)|^q\Phi\left(\frac{1}{1-|z|}\right)\nu(z)\,dA(z)\\
    &\asymp  \int_{\D}|T_g(f)(z)|^q\Phi\left(\frac{1}{1-|z|}\right)\frac{\widehat{\nu}(z)}{1-|z|}\,dA(z)\\
    &\asymp \int_{\D}|f(z)g'(z)|^q(1-|z|)^q\Phi\left(\frac{1}{1-|z|}\right)\frac{\widehat{\nu}(z)}{1-|z|}\,dA(z)\\
    &\lesssim \int_{\D}|f(z)|^q\left(\Psi\left(\frac{1}{1-|z|}\right)\right)^{\frac{q}{p}}\frac{(\om(S(z)))^\frac{q}{p}}{(1-|z|)^2}\,dA(z)\\
    &=\int_{\D}|f(z)|^p|f(z)|^{q-p}\left(\Psi\left(\frac{1}{1-|z|}\right)\right)^{\frac{q}{p}}\frac{(\om(S(z)))^\frac{q}{p}}{(1-|z|)^2}\,dA(z).
    \end{split}
\end{equation}
Applying the growth estimate of $|f|^{q-p}$ based on Lemma \ref{Growthf1} into the above estimate and \eqref{qw}, we derive
$$
\lVert T_g(f) \rVert_{A^q_{\nu,\Phi}}^q \lesssim 1 + \int_{\D}|f(z)|^p\Psi\left(\frac{1}{1-|z|}\right)\frac{\om(S(z))}{(1-|z|)^2}\,dA(z).
$$
From here we follow the same steps as were done in the proof of the sufficiency of Theorem~\ref{mainresult}(i).

Assume next that $T_g: A^p_{\om,\Psi} \to A^q_{\nu,\Phi}$ is bounded. By the subharmonicity of $|g'|^q$ and Proposition~\ref{properties}(ii), we have
\begin{equation*}
    \begin{split}
        &\quad \frac{|g'(a)|^q(1-|a|)^q\left(\Phi\left(\frac{1}{\om(S(a))}\right)\nu(S(a))\right)}
    {\left(\Psi\left(\frac{1}{\om(S(a))}\right)\om(S(a))\right)^\frac{q}{p}}\\
    &\lesssim \frac{(1-|a|)^{q-2}\Phi\left(\frac{1}{\om(S(a))}\right)\nu(S(a))}
    {\left(\Psi\left(\frac{1}{\om(S(a))}\right)\om(S(a))\right)^\frac{q}{p}}\int_{\Delta(a,r)}|g'(z)|^q\,dA(z)\\
    &\asymp \int_{\Delta(a,r)}|g'(z)|^q \frac{(1-|z|)^{q-2}\Phi\left(\frac{1}{\om(S(z))}\right)\nu(S(z))}{\left(\Psi\left(\frac{1}{\om(S(a))}\right)\om(S(a))\right)^\frac{q}{p}}\,dA(z)\\
    &\asymp \int_{\Delta(a,r)}\left(\frac{1-|a|}{|1-\overline{a}z|}\right)^{\frac{\gamma q}{p}}|g'(z)|^q \frac{(1-|z|)^{q-2}\Phi\left(\frac{1}{\om(S(z))}\right)\nu(S(z))}{\left(\Psi\left(\frac{1}{\om(S(a))}\right)\om(S(a))\right)^\frac{q}{p}}\,dA(z)\\
    &\lesssim \int_{\D}|f_a(z)|^q|g'(z)|^q (1-|z|)^{q}\Phi\left(\frac{1}{1-|z|}\right)\frac{\widehat{\nu}(z)}{1-|z|}\,dA(z),\quad a\in\D,
    \end{split}
\end{equation*}
where $f_a$ is exactly the same test function defined in Lemma~\ref{fa in Ap}. By similar reasoning as was done on the proof sufficiency, it follows from the afternote of Lemma \ref{LemmaPsiOm}, \cite[Theorem 5]{PelRat2020} and an integration by parts that
\begin{equation*}
    \begin{split}
        &\int_{\D}|f_a(z)|^q|g'(z)|^q (1-|z|)^{q}\Phi\left(\frac{1}{1-|z|}\right)\frac{\widehat{\nu}(z)}{1-|z|}\,dA(z)\\
        &\asymp \int_{\D}|T_g(f_a)(z)|^q \Phi\left(\frac{1}{1-|z|}\right)\frac{\widehat{\nu}(z)}{1-|z|}\,dA(z)\\
        &\lesssim
        \int_{\D}|T_g(f_a)(z)|^q \Phi\left(\frac{1}{1-|z|}\right)\nu(z)\,dA(z).
    \end{split}
\end{equation*}
Note that the omitted constant depends on $\|T_g(f_a)\|_{A^q_{\nu,\Phi}}$.  As $\| f_a \|_{A^p_{\om,\Psi}}$ is uniformly bounded, we can use the assumption that $T_g: A^p_{\om,\Psi} \to A^q_{\nu,\Phi}$ is bounded together with Proposition \ref{setequiv} to show that the constant is actually universal. By following the steps used in Cases I and II in the proof of Theorem~\ref{mainresult}, we finish the proof.

\medskip

\Prf \emph{Theorem~\ref{Tgbounded}(ii)}
Let $g\in\H(\D)$ and assume $T_g$ is compact. On one hand, since $f_a$ is uniformly bounded in $A^p_{\om,\Psi}$ and $f_a\to 0$ as $|a|\to1^-$, we have by Proposition~\ref{cptseq},
\begin{equation}\label{zx}
    \lim_{|a|\to1^-}\|T_g(f_a)\|_{A^q_{\nu,\Phi}}=0,\quad g\in\H(\D).
\end{equation}
On the other hand, a straightforward consequence from the proof of the necessity of (i) gives us
\begin{equation}\label{zx1}
\begin{split}
    \frac{|g'(a)|^q(1-|a|)^q\left(\Phi\left(\frac{1}{\om(S(a))}\right)\nu(S(a))\right)}
    {\left(\Psi\left(\frac{1}{\om(S(a))}\right)\om(S(a))\right)^\frac{q}{p}}&\lesssim
        \int_{\D}|T_g(f_a)(z)|^q \Phi\left(\frac{1}{1-|z|}\right)\nu(z)\,dA(z),
    \end{split}
\end{equation}
for $a\in\D$. The proof is complete by Proposition \ref{setequiv}
and \eqref{zx}.

Conversely, assume \eqref{Tgcondition2} holds. Let $\{f_n\}$ be a norm bounded sequence in $A^p_{\om,\Psi}$ such that $\|f_n\|_{A^p_{\om,\Psi}}<M$ and $f_n\to0$ uniformly on compact subsets of $\D$, as $n\to\infty$. To prove the assertion, by Proposition~\ref{cptseq}, it  suffices to show that
\begin{equation}\label{qaz}
\lim_{n\to\infty}\|T_g(f_n)\|_{A^q_{\mu,\Phi}}=0.
\end{equation}
To this end, let $\epsilon>0$. Then, the same proof as was used to verify \eqref{qw} together with the dominated convergence theorem implies that there exists a $N_1=N_1(\varepsilon)\in\N$ such that for all $n>N_1$,
\begin{equation*}
\|T_g(f_n)\|^q_{A^q_{\nu,\Phi}}\lesssim  \epsilon + \int_{\D}|T_g(f_n)(z)|^q\Phi\left(\frac{1}{1-|z|}\right)\nu(z)\,dA(z),\quad g\in \H(\D),
\end{equation*}
where the suppressed constant depends on $p,q,\om,\Psi,\Phi$ and $M$. Hence \eqref{1111} indicates that for any $R\in(0,1)$
\begin{equation}\label{sos}
\begin{split}
    \lVert T_g(f_n) \rVert_{A^q_{\nu,\Phi}}^q &\lesssim \epsilon + \int_{\D}|T_g(f_n)(z)|^q\Phi\left(\frac{1}{1-|z|}\right)\nu(z)\,dA(z)\\
    &\asymp\epsilon +\int_{\D}|f_n(z)g'(z)|^q(1-|z|)^q\Phi\left(\frac{1}{1-|z|}\right)\frac{\widehat{\nu}(z)}{1-|z|}\,dA(z)\\
    &=\epsilon +\left(\int_{D(0,R)}+\int_{\D\setminus D(0,R)}\right)|f_n(z)g'(z)|^q(1-|z|)^q\Phi\left(\frac{1}{1-|z|}\right)\frac{\widehat{\nu}(z)}{1-|z|}\,dA(z)\\
    &=\epsilon+I_1+I_2.
    \end{split}
\end{equation}
 Since \eqref{Tgcondition2} holds by the hypothesis, for the above $\epsilon$, there exists a $R=R(\epsilon)\in(0,1)$ such that for any $R<|a|<1$
\begin{equation}\label{111}
    \frac{|g'(a)|^q(1-|a|)^q\left(\Phi\left(\frac{1}{\om(S(a))}\right)\nu(S(a))\right)}
    {\left(\Psi\left(\frac{1}{\om(S(a))}\right)\om(S(a))\right)^\frac{q}{p}}<\epsilon.
\end{equation}
Therefore, for the above fixed $R$ and $\epsilon$, the dominated convergence theorem implies the existence of a natural number $N_2=N_2(R,\epsilon)>N_1$ such that for all $n>N_2$, the inequality $I_1\lesssim\epsilon$ holds. Moreover, by \eqref{111} and Lemma~\ref{Growthf}, we deduce
\begin{equation*}
    I_2\lesssim \epsilon\int_{\D}|f_n(z)|^p\Psi\left(\frac{1}{1-|z|}\right)\frac{\om(S(z))}{(1-|z|)^2}\,dA(z),
\end{equation*}
and therefore $I_2\lesssim\epsilon$ by the proof of the sufficiency of
Theorem~\ref{mainresult}
 and the fact that $\|f_n\|_{A^p_{\om,\Psi}}<M$ for all $n\in\N$. Applying the estimates of $I_1$ and $I_2$ into \eqref{sos}, we prove \eqref{qaz}. The proof is now complete.

\end{document}